\numberwithin{equation}{section}
\DeclareMathOperator*{\argmax}{arg\,max}
\newtheorem{theorem}{Theorem}[section]
\newtheorem{assumption}{Assumption}[section]
\newtheorem{lemma}{Lemma}[section]
\newtheorem{proposition}{Proposition}[section]
\theoremstyle{definition}
\newtheorem{definition}{Definition}[section]
\theoremstyle{remark}
\newtheorem{remark}{Remark}[section]
\newtheorem{example}{Example}[section]
\newcommand{\leref}{Lemma~\ref}
\newcommand{\exref}{Example~\ref}
\newcommand{\prref}{Proposition~\ref}
\newcommand{\thref}{Theorem~\ref}
\renewcommand{\P}{\mathbb{P}}
\newcommand{\Q}{Q}
\newcommand{\R}{\mathbb{R}}
\newcommand{\N}{\mathbb{N}}
\newcommand{\cM}{\mathcal{M}}
\newcommand{\kP}{\mathfrak{P}}
\newcommand{\cI}{\mathcal{I}}
\newcommand{\cJ}{\mathcal{J}}
\newcommand{\eps}{\varepsilon}
\title[]{On hedging American options under model uncertainty}
\author[]{Erhan Bayraktar} \thanks{Erhan Bayraktar and Zhou Zhou are supported in part by the National Science Foundation under grant DMS-0955463. Yu-Jui Huang is supported in part by SFI (07/MI/008 and 08/SRC/FMC1389) and the ERC (278295).}  
\address{Department of Mathematics, University of Michigan}
\email{erhan@umich.edu}
\author[]{Yu-Jui Huang}
\address{School of Mathematical Sciences, Dublin City University}
\email{yujui.huang@dcu.ie}
\author[]{Zhou Zhou}
\address{Department of Mathematics, University of Michigan}
\email{zhouzhou@umich.edu}
\date{First version: September 11, 2013. This version: January 31, 2015.}
\keywords{American options, model independent pricing, semi-static hedging strategies}
\begin{document}
\maketitle

\begin{abstract}
We consider as given a discrete time financial market with a risky asset and options written on that asset and determine both the sub- and super-hedging prices of an American option in the model independent framework of \cite{Nutz2}. We obtain the duality of results for the sub- and super-hedging prices, as well as the existence of the optimal hedging strategies. For the sub-hedging prices we discuss whether the sup and inf in the dual representation can be exchanged (a counter example shows that this is not true in general). For the super-hedging prices we discuss several alternative definitions and argue why our choice is more reasonable.
 Then assuming that the path space is compact, we construct a discretization of the path space and demonstrate the convergence of the hedging prices at the optimal rate. The latter result would be useful for numerical computation of the hedging prices. Our results generalize those of \cite{DS2} to the case when static positions in (finitely many) European options can be used in the hedging portfolio.
\end{abstract}
\section{Introduction}
We consider the problem of pricing and semi-static hedging of American options in the model uncertainty set-up of \cite{Nutz2}. In semi-static hedging stocks are traded dynamically and options are traded statically. This formulation is frequently used in the literature since options are less liquid than stocks (see e.g. \cite{MR2281789}). In this setting, so far only the super-hedging prices of (path dependent) European options under (non-dominated) model uncertainty were considered: see e.g. \cite{Schachermayer2}, \cite{Mathias} and \cite{Nutz2}. \cite{DS13} obtained these results for a continuous time financial market.  Some results are available on the pricing of American options in the model independent framework without the static hedging in options. See for example \cite{DS2} for duality results in discrete time set-up, and \cite{2013arXiv1301.0091B, 2012arXiv1209.6601E, 2012arXiv1212.2140N} for similar duality results and in particular the analysis of the related optimal stopping problem.

In this paper, we consider the problems of sub- and super-hedging of American options using semi-static trading strategies in the model independent set-up of \cite{Nutz2}. We first obtain the duality results for both the sub- and super-hedging prices, as well as the existence of the optimal hedging strategies. Then for compact state spaces we show how to discretize it in order to obtain the optimal rate of convergence.

In the first part of this paper, we focus on the sub-and super-hedging dualities. For the sub-hedging prices we discuss whether the sup and inf in the dual representation can be exchanged. We show that the exchangeability may fail in general unless there is no hedging option. For the super-hedging prices we discuss several alternative definitions. The correct definition involves ``non-anticipative" strategies, which is quite different from the one in the classical case when there is no hedging option. As for the existence for the optimal hedging strategies, we first develop a new proof to obtain the existence of an optimal static hedge. Then we use the non-dominated optimal stopping to obtain the optimal trading strategy in the stock for sub-hedging problem, and the optional decomposition for super-hedging. 

In the second part of this paper, we concentrate on how to use hedging prices in the discretized market to approximate the ones in the original market. This approximation is useful for numerical computations since in the discretized market the state space is finite, and thus there exists a dominating measure on it. Our approximation result is a generalization of \cite{DS2}, but in our case the construction of the approximation becomes much more complicated due to the presence of the hedging options. In particular, in contrast to \cite{DS2}, it is not a priori clear that the discretized market is free of arbitrage. We also show how to pick the prices of the hedging options in the discretized market in order to obtain the optimal convergence rate.
One should note that, although in \cite{DS2}  the no-arbitrage notions of  \cite{Schachermayer2} and  \cite{Nutz2} coincide (see Appendix \ref{section:appD}), in our case they are different since there are hedging options available. We choose to work in the framework of \cite{Nutz2}. 

The rest of the paper is organized as follows: We obtain the duality results for the sub- and super-hedging prices of American options in Sections 2 and 3, respectively.  In Section 4, we discretize the path space and show that hedging prices in the discretized market converge to the original ones. The appendix is devoted to verify some of the statements we make in Sections 1, 2 and 3. Of particular interest, in that section, is the analysis of the adverse optimal stopping problems for nonlinear expectations in discrete time, which resolves the optimal stopping problems in \cite{DS2} for more general state spaces (see Appendix~\ref{sec:appB}). This result is useful particularly in showing the existence of the optimal sub-hedging strategy. The existence of the optimal super hedging strategy is a consequence of the non-dominated optional decomposition theorem \cite{Nutz2} and the analysis in Appendix \ref{sec:appC}.

The remainder of this section is devoted to setting up the notation used in the rest of the paper. 

\subsection{Notation}\label{sec:set-up}\label{notation}

We use the set-up in \cite{Nutz2}. Let $T\in\mathbb{N}$ be the time Horizon and let $\Omega_1$ be a Polish space. For $t\in\{0,1,\dotso,T\}$, let $\Omega_t:=\Omega_1^t$ be the $t$-fold Cartesian product, with the convention that $\Omega_0$ is a singleton. We denote by $\mathcal{F}_t$ the universal completion of $\mathcal{B}(\Omega_t)$ and write $(\Omega,\mathcal{F})$ for $(\Omega_T,\mathcal{F}_T)$. For each $t\in\{0,\dotso,T-1\}$ and $\omega\in\Omega_t$, we are given a nonempty convex set $\mathcal{P}_t(\omega)\subset\mathfrak{P}(\Omega_1)$ of probability measures.  Here $\mathcal{P}_t$ represents the possible models for the $t$-th period, given state $\omega$ at time $t$. We assume that for each $t$, the graph of $\mathcal{P}_t$ is analytic, which ensures that $\mathcal{P}_t$ admits a universally measurable selector, i.e., a universally measurable kernel $P_t:\ \Omega_t\rightarrow \mathfrak{P}(\Omega_t)$ such that $P_t(\omega)\in\mathcal{P}_t(\omega)$ for all $\omega\in\Omega_t$. Let
\begin{equation}\label{prob}
\mathcal{P}:=\{P_0\otimes\dotso\otimes P_{T-1}:\ P_t(\cdot)\in\mathcal{P}_t(\cdot),\ t=0,\dotso,T-1\},
\end{equation}
where each $P_t$ is a universally measurable selector of $\mathcal{P}_t$, and
$$P_0\otimes\dotso\otimes P_{T-1}(A)=\int_{\Omega_1}\dotso\int_{\Omega_1} 1_A(\omega_1,\dotso,\omega_T)P_{T-1}(\omega_1,\dotso,\omega_{T-1};d\omega_T)\dotso P_0(d\omega_1),\ \ \ A\in\Omega.$$

Let $S_t:\Omega_t\rightarrow\mathbb{R}$ be Borel measure, which represents the price at time $t$ of a stock $S$ that can be traded dynamically in the market.  Let $g=(g_1,\dotso,g_e):\Omega\rightarrow\mathbb{R}^e$ be Borel measurable, representing the options that can only be traded at the beginning at price $0$. Assume NA$(\mathcal{P})$ holds, i.e, for all $(H,h)\in\mathcal{H}\times\mathbb{R}^e$,
$$(H\cdot S)_T+hg\geq 0\ \ \ \mathcal{P}-\text{q.s.\ \ \ \ \ \ implies \ \ \ \ \ \ } (H\cdot S)_T+hg=0\ \ \ \mathcal{P}-\text{q.s.},$$
where $\mathcal{H}$ is the set of predictable processes representing trading strategies, $(H\cdot S)_T=\sum_{t=0}^{T-1}H_t(S_{t+1}-S_t)$, and $h g$ denotes the inner product of $h$ and $g$. Then from \cite[FTAP]{Nutz2}, for all $P\in\mathcal{P}$, there exists $Q\in\mathcal{Q}$ such that $P\ll Q$, where 
$$\mathcal{Q}:=\{Q \text{ martingale measure}\footnote{That is, $Q$ satisfies $E_Q[|S_{t+1}|\ |\mathcal{F}_t]<\infty$ and $E_Q[S_{t+1}|\mathcal{F}_t]=S_t, \ Q$-a.s. for $t=0,\dotso,T-1$.}:\ E_Q[g^i]=0,\ i=1,\dotso,e, \text{ and }\exists P'\in\mathcal{P}, \text{ s.t. } Q\ll P'\}.$$
In the next section we will consider an American option with pay-off stream $\Phi$. We will assume that $\Phi:\{0,\dotso,T\}\times\Omega\rightarrow\mathbb{R}$ is adapted\footnote{Unless otherwise specified the measurability and related concepts (adaptedness, etc) are with respect to the filtration $(\mathcal{F}_t)_{t=0}^T$.}. Let $\mathcal{T}$ be the set of stopping times with respect to the {\em{raw}} filtration $(\mathcal{B}(\Omega_t))_{t=0}^T$, and $\mathcal{T}_t\subset\mathcal{T}$ the set of stopping times that are no less than $t$.
For $t=0,\dotso,T$ and $\omega\in\Omega_t$, define 
$$\mathcal{Q}_t(\omega):=\{Q\in\mathfrak{P}(\Omega_1):\ Q\ll P, \text{ for some } P\in\mathcal{P}_t(\omega),\text{ and } E_Q[\Delta S_{t+1}(\omega,\cdot)]=0\}.$$
By \cite[Lemma 4.8]{Nutz2}, there exists a universally measurable selector $Q_t$ such that $Q_t(\cdot)\in\mathcal{Q}_t(\cdot)$ on $\{\mathcal{Q}_t\neq\emptyset\}$. Using these selectors we define for $t\in\{0,\dotso,T-1\}$ and $\omega\in\Omega_t$,
$$\mathcal{M}_t(\omega):=\left\{Q_t\otimes\dotso\otimes Q_{T-1}:\ Q_i(\omega,\cdot)\in\mathcal{Q}_i(\omega,\cdot)\text{ on }\{\mathcal{Q}_i(\omega,\cdot)\neq\emptyset\},\ i=t,\dotso,T-1\right\},$$
which is similar to \eqref{prob} but starting from time $t$ instead of time 0. In particular $\mathcal{M}_0=\mathcal{M}$, where 
\begin{equation}\label{123}
\mathcal{M}:=\{Q \text{ martingale measure}: \exists P\in\mathcal{P}, \text{ s.t. }Q\ll P\}.
\end{equation} 
We will assume in the rest of the paper that the graph of $\mathcal{M}_t$ is analytic, $t=0,\dotso,T-1$. Below we provide a general sufficient condition for the analyticity of graph($\mathcal{M}_t)$ and leave its proof to Appendix~\ref{sec:appA}. 
\begin{proposition}\label{prop 1.1}
For $t=0,\dotso,T-1$ and $\omega\in\Omega_t$, define 
$$\mathbb{P}_t(\omega):=\{P_t\otimes\dotso\otimes P_{T-1}:\ P_i(\omega,\cdot)\in\mathcal{P}_i(\omega,\cdot),\ i=t,\dotso,T-1\},$$
where each $P_i$ is a universally measurable selector of $\mathcal{P}_i$. If graph($\mathbb{P}_t$) is analytic, then graph($\mathcal{M}_t$) is also analytic. 
\end{proposition}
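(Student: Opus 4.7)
The plan is to recharacterize $\mathcal{M}_t(\omega)$ as the set of martingale measures on $\Omega_{T-t}$ that are dominated by some element of $\mathbb{P}_t(\omega)$, and then to exhibit $\operatorname{graph}(\mathcal{M}_t)$ as the intersection of a Borel set (encoding the martingale property) with an analytic set (encoding the existence of a dominating measure in $\mathbb{P}_t$), from which analyticity follows.

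The main technical step is the identity
\[
\mathcal{M}_t(\omega) \;=\; \bigl\{Q \in \mathfrak{P}(\Omega_{T-t}) : Q \text{ is a martingale measure for } S \text{ after time } t,\ Q \ll P \text{ for some } P \in \mathbb{P}_t(\omega)\bigr\}.
\]
For the forward inclusion, I would take $Q = Q_t \otimes \cdots \otimes Q_{T-1} \in \mathcal{M}_t(\omega)$ and note that each kernel satisfies $Q_i(\omega',\cdot) \ll P_i(\omega',\cdot)$ for some $P_i(\omega',\cdot) \in \mathcal{P}_i(\omega',\cdot)$; a Jankov--von Neumann selection applied to the set $\{(\omega',P) : P \in \mathcal{P}_i(\omega'),\ Q_i(\omega') \ll P\}$ (analytic since $\operatorname{graph}(\mathcal{P}_i)$ is analytic by the standing assumption and $\{(Q,P) : Q \ll P\}$ is Borel in $\mathfrak{P}(\Omega_1)^2$) yields a universally measurable $P_i$, and then $P := P_t \otimes \cdots \otimes P_{T-1} \in \mathbb{P}_t(\omega)$ dominates $Q$, while the martingale property of $Q$ is inherited from the one-step martingale property built into each $\mathcal{Q}_i$. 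For the reverse inclusion, I would disintegrate a given $Q \ll P_t \otimes \cdots \otimes P_{T-1}$ into its regular conditional kernels $Q_i$ and apply the conditional Radon--Nikodym theorem together with the tower-property form of the martingale identity to conclude that $Q_i(\omega') \in \mathcal{Q}_i(\omega')$ off a $P$-null set; on that exceptional set I would redefine $Q_i$ via a universally measurable selector of $\mathcal{Q}_i$, which exists because $\operatorname{graph}(\mathcal{Q}_i)$ is analytic by \cite[Lemma 4.8]{Nutz2}.

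With the identity in hand I would write $\operatorname{graph}(\mathcal{M}_t) = M \cap L$, where $M := \{(\omega,Q) : Q \text{ is a martingale measure for } S \text{ after time } t\}$ is Borel (its definition amounts to countably many Borel-measurable integral identities in $(\omega,Q)$), and
\[
L := \bigl\{(\omega,Q) : \exists\, P \in \mathbb{P}_t(\omega),\ Q \ll P\bigr\}
\]
is the projection onto $(\omega,Q)$ of the intersection of the analytic set $\operatorname{graph}(\mathbb{P}_t) \times \mathfrak{P}(\Omega_{T-t})$ with the Borel set $\Omega_t \times \{(P,Q) : Q \ll P\}$, hence analytic. Since the intersection of a Borel and an analytic set is analytic, $\operatorname{graph}(\mathcal{M}_t)$ is analytic. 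The main obstacle will be the reverse inclusion of the characterization: after disintegration the kernel-level conditions hold only off a $P$-null set, so one must appeal to the analyticity of $\operatorname{graph}(\mathcal{Q}_i)$ to patch the kernels up pointwise, and simultaneously verify that this modification does not alter the resulting product, which equals $Q$ by construction on a set of full measure.
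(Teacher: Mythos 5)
Your proposal takes essentially the same route as the paper: both recharacterize $\mathcal{M}_t(\omega)$ as the set of martingale measures on $\Omega_{T-t}$ dominated by some $P\in\mathbb{P}_t(\omega)$, and then exhibit $\operatorname{graph}(\mathcal{M}_t)$ as the intersection of a Borel set (the martingale property, via countably many integral identities over a generating algebra) with an analytic set (the existence of a dominating measure, obtained as a projection involving $\operatorname{graph}(\mathbb{P}_t)$). The paper cites earlier lemmas for both steps while you sketch the selection/disintegration and projection arguments directly, but the decomposition and the appeals to the projection theorem and to the Borelness of $\{(P,Q):Q\ll P\}$ are the same.
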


For any measurable function $f$ and probability measure $P$, we define the $P$-expectation of $f$ as $E_P[f]=E_P[f^+]-E_P[f^-]$ with convention $\infty-\infty=-\infty$. We use $|\cdot|$ to denote the sup norm in various cases. For $\omega\in\Omega$ and $t\in\{0,\dotso,T\}$, we will use the notation $\omega^t\in\Omega_t$ to denote the path up to time $t$. For a given function $f$ defined on $\Omega$, let us denote
$$\underline{\mathcal{E}}_\tau(f)(\omega):=\inf_{Q\in\mathcal{M}_{\tau(\omega)}(\omega^{\tau(\omega)})}E_Q[f(\omega^{\tau(\omega)},\cdot)],\quad \omega\in\Omega,$$
and
$$\overline{\mathcal{E}}_\tau(f)(\omega):=\sup_{Q\in\mathcal{M}_{\tau(\omega)}(\omega^{\tau(\omega)})}E_Q[f(\omega^{\tau(\omega)},\cdot)],\quad \omega\in\Omega.$$

We use the abbreviations u.s.a. for upper-semianalytic, l.s.c. for lower-semicontinuous, and u.s.c. for upper-semicontinuous. 

\section{Sub-hedging Duality}
\noindent We define the sub-hedging price of the American option as
\begin{equation}\label{sub}
\underline\pi(\Phi):=\sup\left\{x\in\mathbb{R}:\ \exists (H,\tau,h)\in\mathcal{H}\times\mathcal{T}\times\mathbb{R}^e,\ \text{ s.t. } \Phi_\tau+(H\cdot S)_T+hg\geq x,\ \mathcal{P}-q.s.\right\}.
\end{equation}
\begin{remark}
In the above definition, we require the trading in the stock $S$ to be up to time $T$ instead of $\tau$. This is because  it is possible that the maturities of some options in $g$ are later than $\tau$. When there is no hedging options involved, for sub-hedging (and in fact also super-hedging) trading $S$ up to time $T$ is equivalent to up to time $\tau$ (e.g. see the beginning of the proof of \cite[Proposition 6.1]{ZZ}).
\end{remark}
We have the following duality theorem for sub-hedging prices.
\begin{theorem}\label{sub}
Assume that $\Phi_t$ is l.s.a. for $t=1,\dotso,T$. Then
\begin{equation}\label{dual1}
\underline\pi(\Phi)=\sup_{\tau\in\mathcal{T}}\inf_{Q\in\mathcal{Q}}E_Q[\Phi_\tau].
\end{equation}
Moreover, if $\sup_{Q\in\mathcal{M}}E_Q[|g|]<\infty$, $\sup_{Q\in\mathcal{M}}E_Q[\max_{0\leq t\leq T}|\Phi_t|]<\infty$, and for any $h\in\mathbb{R}^e$ and $t\in\{0,\dotso,T-1\}$, the maps $\Phi_t+\underline{\mathcal{E}}_t(hg)$ and  $\phi:\ \Omega\mapsto\mathbb{R}^e$ defined by
$$\phi=\underline{\mathcal{E}}_t\left(\inf_{\tau\in\mathcal{T}_{t+1}}\underline{\mathcal{E}}_{t+1}\left(\Phi_\tau+\underline{\mathcal{E}}_\tau(hg)\right)\right)\quad \left(\text{or }\phi=\underline{\mathcal{E}}_t\left(\sup_{Q\in\mathcal{M}_{t+1}}\inf_{\tau\in\mathcal{T}_{t+1}}E_Q\left(\Phi_\tau+\underline{\mathcal{E}}_\tau(hg)\right)\right)\right)$$
are Borel measurable, then there exists $(H^*,\tau^*,h^*)\in\mathcal{H}\times\mathcal{T}\times\mathbb{R}^e$, such that
\begin{equation}\label{aadd}
\Phi_{\tau^*}+(H^*\cdot S)_T+h^*g\geq\pi(\Phi),\quad\mathcal{P}-q.s.
\end{equation}
\end{theorem}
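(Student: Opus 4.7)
I would prove the duality \eqref{dual1} first, and then, under the additional integrability and measurability hypotheses, construct $(H^*,\tau^*,h^*)$ in three sub-steps.

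\emph{Duality.} The inequality $\underline\pi(\Phi)\le\sup_{\tau\in\mathcal{T}}\inf_{Q\in\mathcal{Q}}E_Q[\Phi_\tau]$ is ``weak duality'': integrate the q.s.\ inequality $\Phi_\tau+(H\cdot S)_T+hg\ge x$ against any $Q\in\mathcal{Q}$, use $E_Q[g^i]=0$ and the supermartingale bound on the stochastic integral $H\cdot S$ (valid because the q.s.\ inequality bounds $H\cdot S$ below by a $Q$-integrable quantity), to deduce $x\le E_Q[\Phi_\tau]$. For the reverse inequality I would fix an arbitrary $\tau\in\mathcal{T}$ and regard $\Phi_\tau$ as a single European claim at time $T$. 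Since $\tau$ is raw-measurable, $\{\tau=t\}\in\mathcal{B}(\Omega_t)$ and hence $\Phi_\tau=\sum_{t=0}^T\Phi_t\mathbf{1}_{\{\tau=t\}}$ is l.s.a.; applying the European super-hedging duality of \cite{Nutz2} to the u.s.a.\ payoff $-\Phi_\tau$ produces, for every $y<\inf_{Q\in\mathcal{Q}}E_Q[\Phi_\tau]$, a pair $(H,h)\in\mathcal{H}\times\mathbb{R}^e$ with $\Phi_\tau+(H\cdot S)_T+hg\ge y$ $\mathcal{P}$-q.s. Thus $(H,\tau,h)$ is admissible at level $y$, and supremising over $\tau$ gives \eqref{dual1}.

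\emph{Existence of $h^*$.} Set $V(h):=\sup_{\tau\in\mathcal{T}}\inf_{Q\in\mathcal{M}}E_Q[\Phi_\tau+hg]$. The classical (no-options) American sub-hedging duality applied pointwise in $h$ yields $\underline\pi(\Phi)=\sup_{h\in\mathbb{R}^e}V(h)$. This is the main obstacle: $V$ is a sup over $\tau$ of $h$-concave continuous functions, so its hypograph is in general not convex and the usual convex-analytic compactness fails. My strategy is to combine coercivity with sequential compactness: (i) use NA$(\mathcal{P})$ to prove $V(h)\to-\infty$ as $|h|\to\infty$---since in every unit direction $v$ there exists $Q_v\in\mathcal{M}$ with $v\cdot E_{Q_v}[g]<0$, and $\sup_{Q\in\mathcal{M}}E_Q[\max_t|\Phi_t|]<\infty$ dominates the residual term---so the maximisation of $V$ reduces to a bounded ball in $\mathbb{R}^e$; (ii) along a maximising sequence $h^n\to h^*$, extract near-optimal $(\tau^n,Q^n)$ and use the integrability of $|g|$ and $\max_t|\Phi_t|$ together with a diagonal/uniform-convergence argument to pass $V(h^n)\to\underline\pi(\Phi)$ into $V(h^*)\ge\underline\pi(\Phi)$. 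This is the ``new proof'' of the static-hedging existence advertised in the introduction.

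\emph{Existence of $\tau^*$ and $H^*$.} Once $h^*$ is fixed, the Borel-measurability hypotheses on $\Phi_t+\underline{\mathcal{E}}_t(h^*g)$ and on the function $\phi$ are exactly what the adverse nonlinear optimal stopping theory of Appendix~\ref{sec:appB} requires: it produces $\tau^*\in\mathcal{T}$ with $\inf_{Q\in\mathcal{M}}E_Q[\Phi_{\tau^*}+h^*g]=V(h^*)=\underline\pi(\Phi)$. The $\mathcal{F}_T$-measurable claim $f:=\Phi_{\tau^*}+h^*g$ then satisfies $\inf_{Q\in\mathcal{M}}E_Q[f]\ge\underline\pi(\Phi)$, so the non-dominated optional decomposition theorem of \cite{Nutz2} applied to $f$ delivers $H^*\in\mathcal{H}$ with $\Phi_{\tau^*}+(H^*\cdot S)_T+h^*g\ge\underline\pi(\Phi)$ $\mathcal{P}$-q.s., completing the proof.
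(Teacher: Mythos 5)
Your overall route matches the paper's: reduce the American duality to European sub-hedging via \cite[Theorem 5.1 (b)]{Nutz2} for each fixed $\tau$, obtain an optimal $h^*$ by compactness of the static-hedge parameter, invoke the adverse nonlinear optimal stopping theory of Appendix~\ref{sec:appB} for $\tau^*$, and apply \cite[Theorem 4.9]{Nutz2} for $H^*$. Two points in the $h^*$-step deserve correction or simplification.

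First, the coercivity claim in your step (i) --- that for \emph{every} unit direction $v$ there exists $Q_v\in\mathcal{M}$ with $v\cdot E_{Q_v}[g]<0$, hence $V(h)\to-\infty$ as $|h|\to\infty$ --- is stronger than what NA$(\mathcal{P})$ yields and is false in general: if a hedging option is redundant (replicable at cost zero by the stock and the remaining options), then in that direction $v$ one has $v\cdot E_Q[g]=0$ for every $Q\in\mathcal{M}$, so $V(h+sv)\equiv V(h)$ and $V$ is not coercive. The present theorem does not assume non-redundancy (that hypothesis appears only later, in Assumption~\ref{ass:SA}(ii)). The paper's Step~1 establishes only that $0$ lies in the \emph{relative} interior of $\{E_Q[g]:\ Q\in\mathcal{M}\}$, and the reduction to a compact $\mathbb{K}$ relies on $V$ being constant along the lineality directions of that set.

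Second, your step (ii) --- passing to the limit along near-optimal $(\tau^n,Q^n)$ via a diagonal argument --- is both unnecessary and delicate, since upper semicontinuity of a $\sup$-$\inf$ is not automatic. The paper's resolution is one line: $|\varphi(h)-\varphi(h')|\leq e\,|h-h'|\sup_{Q\in\mathcal{M}}E_Q|g|$, so $\varphi$ (your $V$) is Lipschitz continuous by the first integrability hypothesis, and a maximiser over the compact $\mathbb{K}$ exists by Weierstrass. Once $h^*$ is fixed, your remaining steps --- inserting $\underline{\mathcal{E}}_\tau(h^*g)$ via \cite[Theorem 2.3]{Nutz3}, invoking \thref{infsup} under the stated Borel-measurability hypotheses, and then applying the non-dominated super-hedging duality to $\Phi_{\tau^*}+h^*g$ --- coincide with the paper's.
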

\begin{proof}
For any $\tau\in\mathcal{T}$, define
$$\underline\pi(\Phi_\tau):=\sup\left\{x\in\mathbb{R}:\ \exists (H,h)\in\mathcal{H}\times\mathbb{R}^e,\ \text{ s.t. } \Phi_\tau+(H\cdot S)_T+hg\geq x,\ \mathcal{P}-q.s.\right\}.$$
Since $\Phi_t$ is u.s.a. and $\tau$ is a stopping time with respect to the raw filtration, it follows that $\Phi_\tau$ is u.s.a. Then applying \cite[Theorem 5.1 (b)]{Nutz2}, we get
$$\underline\pi(\Phi_\tau)=\inf_{Q\in\mathcal{Q}}\mathbb{E}_Q[\Phi_\tau]\ \ \ \Longrightarrow\ \ \ \sup_{\tau\in\mathcal{T}}\underline\pi(\Phi_\tau)=\sup_{\tau\in\mathcal{T}}\inf_{Q\in\mathcal{Q}}\mathbb{E}_Q[\Phi_\tau].$$
Since $\underline\pi(\Phi)\geq\underline\pi(\Phi_\tau),\ \forall\tau\in\mathcal{T}$, it follows that $\underline\pi(\Phi)\geq\sup_{\tau\in\mathcal{T}}\underline\pi(\Phi_\tau)$. Therefore, it remains to show that $\underline\pi(\Phi)\leq\sup_{\tau\in\mathcal{T}}\underline\pi(\Phi_\tau)$. For any $\eps>0$, there exists $x\in(\underline\pi(\Phi)\wedge(1/\eps)-\eps,\underline\pi(\Phi)\wedge(1/\eps)]$ and $(H^\eps,\tau^\eps,h^\eps)\in\mathcal{H}\times\mathcal{T}\times\mathbb{R}^e$ satisfying
$$\Phi_{\tau^\eps}+(H^\eps\cdot S)_T+h^\eps g\geq x,\ \mathcal{P}-q.s.$$
As a result,
$$\underline\pi(\Phi)\wedge\frac{1}{\eps}-\eps< x\leq \underline\pi(\Phi_{\tau^\eps})\leq\sup_{\tau\in\mathcal{T}}\underline\pi(\Phi_\tau),$$
from which \eqref{dual1} follows since $\eps$ is arbitrary.

Let us turn to the proof of the existence of the optimal sub-hedging strategies. Similar to the proof above, we can show that
\begin{equation}\notag
\underline\pi(\Phi)=\sup_{h\in\mathbb{R}^e}\sup_{\tau\in\mathcal{T}}\sup\{x:\ \exists H\in\mathcal{H},\ \text{s.t.}\ \Phi_\tau+(H\cdot S)_T+hg\geq x,\ \mathcal{P}-q.s.\}=\sup_{h\in\mathbb{R}^e}\sup_{\tau\in\mathcal{T}}\inf_{Q\in\mathcal{M}}E_Q[\Phi_\tau+hg].
\end{equation}
We shall first show in two steps that the optimal $h^*$ exists for the above equations.\\
\textbf{Step 1}: We claim that $0$ is in the relative interior of the convex set $\{E_Q[g],\ \Q\in\mathcal{M}\}$. If not, then there exists $h\in\mathbb{R}^e$, such that $E_Q[hg]\leq 0,$ for any $Q\in\mathcal{M}$, and moreover there exists $\bar Q\in\mathcal{M}$, such that $E_{\bar Q}[hg]<0$. By \cite[Theorem 4.9]{Nutz2}, the super-hedging price of $hg$ (using only the stock) is $\sup_{Q\in\mathcal{M}}E_Q[hg]\leq 0$, and there exists $H\in\mathcal{H}$, such that
$$(H\cdot S)_T\geq hg,\quad\mathcal{P}-q.s.$$
Then $E_{\bar Q}[(H\cdot S)_T-hg]>0$, and thus, for any $P\in\mathcal{P}$ dominating $\bar Q$, we have that
$$P((H\cdot S)_T-hg>0)>0,$$
which contradicts NA$(\mathcal{P})$. \\
\textbf{Step 2}: Since  $0$ is a relative interior point of $\{E_Q[g],\ \Q\in\mathcal{M}\}$, and $\sup_{Q\in\mathcal{M}}E_Q[\max_{0\leq t\leq T}|\Phi_t|]<\infty$, we know that 
$$\underline\pi(\Phi)=\sup_{h\in\mathbb{R}^e}\sup_{\tau\in\mathcal{T}}\inf_{Q\in\mathcal{M}}E_Q[\Phi_\tau+hg]=\sup_{h\in\mathbb{K}}\sup_{\tau\in\mathcal{T}}\inf_{Q\in\mathcal{M}}E_Q[\Phi_\tau+hg]$$
where $\mathbb{K}$ is a compact subset  of $\mathbb{R}^e$. Define the map $\varphi:\ \mathbb{R}^e\mapsto\mathbb{R}$ by
$$ \varphi(h)=\sup_{\tau\in\mathcal{T}}\inf_{Q\in\mathcal{M}}E_Q[\Phi_\tau+hg].$$
The function $\varphi$ is continuous since $|\varphi(h)-\varphi(h')|\leq e|h-h'|\sup_{Q\in\mathcal{M}}E_Q|g|$. Hence, there exists $h^*\in\mathbb{K}\subset\mathbb{R}^e$ such that
\begin{equation}\label{aacc}
\underline\pi(\Phi)=\sup_{\tau\in\mathcal{T}}\inf_{Q\in\mathcal{M}}E_Q[\Phi_\tau+h^*g]=\sup_{\tau\in\mathcal{T}}\inf_{Q\in\mathcal{M}}E_Q[\Phi_\tau+\underline{\mathcal{E}}_\tau(h^*g)],
\end{equation}
where the second equality above follows from  \cite[Theorem 2.3]{Nutz3}. Using the measurability assumptions in the statement of this theorem, we can apply \thref{infsup}, and obtain a $\tau^*\in\mathcal{T}$ that is optimal for \eqref{aacc}, i.e.,
\begin{eqnarray}
\underline\pi(\Phi)&=&\inf_{Q\in\mathcal{M}}E_Q[\Phi_{\tau^*}+\underline{\mathcal{E}}_{\tau^*}(h^*g)]=\sup_{\tau\in\mathcal{T}}\inf_{Q\in\mathcal{M}}E_Q[\Phi_\tau+h^*g]\notag\\
&=&\sup\{x:\ \exists H\in\mathcal{H},\ \text{s.t.}\ \Phi_{\tau^*}+(H\cdot S)_T+h^*g\geq x,\ \mathcal{P}-q.s.\}
\end{eqnarray}
Then by \cite[Theorem 4.9]{Nutz2}, there exists a strategy $H^*\in\mathcal{H}$, such that \eqref{aadd} holds.
\end{proof}

\subsection{Exchangeability of the supremum and infimum in \eqref{dual1}}

When there are no options available for static hedging (then $\mathcal{Q}=\mathcal{M}$), $\mathcal{Q}$ is closed under pasting. Using this property we show in \thref{infsup}  and \prref{exa} that the order of \lq\lq inf\rq\rq\ and \lq\lq sup\rq\rq\ in \eqref{dual1} can be exchanged under some reasonable assumptions. These conclusions cover the specific results of \cite{DS2} which works with a compact path space. (Although, our no arbitrage assumption seems to be different than the one in \cite{DS2}, we verify in Proposition~\ref{prop:no-opt} that they are the same when there are no options, i.e., $e=0$.) The same holds true for our super-hedging result in the next section. 

In general, $\mathcal{Q}$ may not be stable under pasting due to the distribution constraints imposed by having to price the given  options correctly. Then whether  the \lq\lq inf\rq\rq\ and \lq\lq sup\rq\rq\ in \eqref{dual1} can be exchanged is not clear, and in fact may not be possible as the example below demonstrates.
\begin{center}
\includegraphics[scale=0.5]{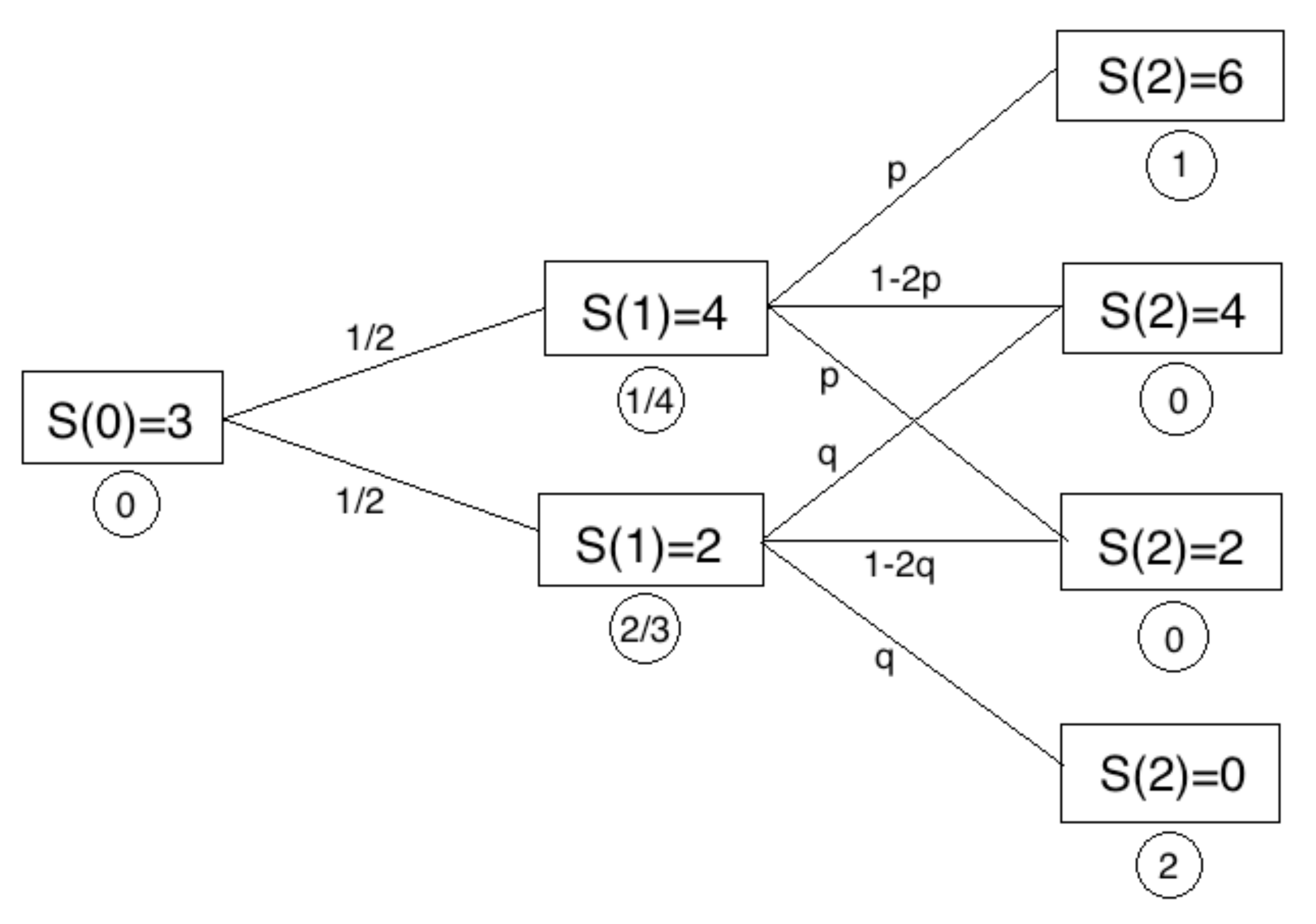}
\end{center}
\begin{example}\label{counterexample}
We consider a two-period model as described by the figure above. The stock price process is restricted to the finite path space indicated by the graph, where $S(t)$ is the stock price at time $t,\ t=0,1,2$. Let $\mathcal{P}$ be all the probability measures on this path space. Then each martingale measure $Q\in\mathcal{M}$ can be uniquely characterized by a pair $(p,q),\ 0\leq p,q\leq 1/2$, as indicated in the graph. Assume there is one European option $g=[S(2)-3]^+-5/6$ that can be traded at price $0$. Let $\Phi$ be the payoff of a path-independent American option that needs to be hedged. In the graph, the number in each circle right below the rectangle (node) represents the value of $\Phi$ when the stock price is at that node.

Each $Q\in\mathcal{Q}\subset\mathcal{M}$ is characterized by $(p,q)$ with the additional condition: $p+q=2/3$. There are in total 5 stopping strategies: stop at node $S(0)=3$, or continue to node $S(1)=k,\ k=2,4$, then choose either to stop or to continue. It is easy to check that
\begin{equation}\notag
\sup_{\tau\in\mathcal{T}}\inf_{Q\in\mathcal{Q}}E_Q[\Phi_\tau]=0\vee\frac{11}{24}\vee\left(\frac{1}{8}+\inf_{\substack{0\leq p, q\leq 1/2\\p+q=2/3}}q\right)\vee\left(\inf_{\substack{0\leq p, q\leq 1/2\\p+q=2/3}}\frac{p}{2}+\frac{1}{3}\right)\vee\left(\inf_{\substack{0\leq p, q\leq 1/2\\p+q=2/3}}\left(\frac{p}{2}+q\right)\right)=\frac{11}{24},
\end{equation}
and
\begin{equation}\notag
\inf_{Q\in\mathcal{Q}}\sup_{\tau\in\mathcal{T}}E_Q[\Phi_\tau]=\inf_{\substack{0\leq p, q\leq 1/2\\p+q=2/3}}\left[\frac{1}{2}\left(p\vee\frac{1}{4}+2q\vee\frac{2}{3}\right)\vee0\right]=\frac{1}{2}>\sup_{\tau\in\mathcal{T}}\inf_{Q\in\mathcal{Q}}E_Q[\Phi_\tau].
\end{equation}
\end{example}

\section{The Super-hedging Duality}
\noindent We define the super-hedging price as
\begin{equation}\label{sup}
\overline\pi(\Phi):=\inf\left\{x\in\mathbb{R}:\ \exists (H,h)\in\mathcal{H}'\times\mathbb{R}^e,\ \text{ s.t. } x+(H\cdot S)_T+hg\geq \Phi_\tau,\ \mathcal{P}-q.s.,\ \forall\tau\in\mathcal{T}\right\},
\end{equation}
where $\mathcal{H}'$ is the set of processes that have the \lq\lq  non-anticipativity\rq\rq\ property, i.e.,
\begin{equation}\label{nonanti}
\mathcal{H}':=\{H:\mathcal{T}\mapsto \mathcal{H},\text{ s.t. }H_t(\tau^1)=H_t(\tau^2),\ \forall t<\tau^1\wedge\tau^2\}.
\end{equation}
In other words, the seller of the American option is allowed to adjust the trading strategy according to the stopping time $\tau$ after it is realized. 

The following is our duality theorem for the super-hedging prices.
\begin{theorem}\label{supheg}
Assume that for $(\omega,P)\in\Omega_T\times\mathfrak{P}(\Omega_{T-t})$, 
\begin{equation}\label{cc10}
\text{the map } (\omega,P)\mapsto\sup_{\tau\in\mathcal{T}_t}E_P[\Phi_\tau(\omega^t,\cdot)] \text{ is u.s.a.},\  t=1,\dotso,T.
\end{equation}
Then
\begin{equation}\label{supdual}
\overline\pi(\Phi)=\inf_{h\in\mathbb{R}^e}\sup_{\tau\in\mathcal{T}}\sup_{Q\in\mathcal{M}}E_Q[\Phi_\tau-hg].
\end{equation}
Moreover, if $\sup_{Q\in\mathcal{M}}E_Q[|g|]<\infty$ and $\sup_{Q\in\mathcal{M}}E_Q[\max_{0\leq t\leq T}|\Phi_t|]<\infty$, then there exists $(H^*,h^*)\in\mathcal{H}'\times\mathbb{R}^e$, such that
\begin{equation}\label{aaee}
\overline\pi(\Phi)+(H^*\cdot S)_T+h^*g\geq\Phi_\tau,\ \mathcal{P}-q.s.,\ \forall\tau\in\mathcal{T}.
\end{equation}
\end{theorem}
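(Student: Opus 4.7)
The plan is to prove duality~\eqref{supdual} first and then extract the optimizer $(H^*,h^*)$ by recycling Steps~1 and 2 of the proof of \thref{sub}. To decouple the static position from the dynamic trading, I fix $h\in\R^e$ and set
$$\overline\pi_h(\Phi):=\inf\bigl\{x\in\R:\exists H\in\mathcal{H}',\ \forall\tau\in\mathcal{T},\ x+(H(\tau)\cdot S)_T+hg\geq\Phi_\tau\ \mathcal{P}\text{-q.s.}\bigr\},$$
so that $\overline\pi(\Phi)=\inf_{h\in\R^e}\overline\pi_h(\Phi)$ and the problem reduces to showing $\overline\pi_h(\Phi)=\sup_{\tau\in\mathcal{T}}\sup_{Q\in\mathcal{M}}E_Q[\Phi_\tau-hg]$ for each $h$. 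The inequality ``$\geq$'' follows by the standard $Q$-supermartingale argument: $(H(\tau)\cdot S)$ is bounded below by an integrable random variable under the hedging inequality, so $E_Q[(H(\tau)\cdot S)_T]\leq 0$ and hence $x\geq E_Q[\Phi_\tau-hg]$ for every $(\tau,Q)$.

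The harder direction ``$\leq$'' is the main obstacle, and I would attack it by combining a Snell-type envelope with the non-dominated optional decomposition theorem. Set
$$U_T:=\Phi_T-hg,\qquad U_t:=\max\bigl(\Phi_t-\underline{\mathcal{E}}_t(hg),\ \overline{\mathcal{E}}_t(U_{t+1})\bigr),\quad t<T;$$
the ``stop-now'' term $\Phi_t-\underline{\mathcal{E}}_t(hg)=\sup_{Q\in\mathcal{M}_t}E_Q[\Phi_t-hg\,|\,\mathcal{F}_t]$ is precisely the minimum wealth at time $t$ needed to super-hedge the residual European claim $\Phi_t-hg$ on $[t,T]$. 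Assumption~\eqref{cc10} together with the dynamic programming principle of \cite[Theorem~2.3]{Nutz3} propagates upper-semianalyticity through the recursion and yields $U_0=\sup_{\tau}\sup_{Q}E_Q[\Phi_\tau-hg]$. Since $U$ is an $\overline{\mathcal{E}}$-supermartingale by construction, the non-dominated optional decomposition theorem produces $\tilde H\in\mathcal{H}$ with $U_0+(\tilde H\cdot S)_t\geq U_t$ $\mathcal{P}$-q.s.\ for every $t$; in particular $U_0+(\tilde H\cdot S)_\tau\geq\Phi_\tau-\underline{\mathcal{E}}_\tau(hg)$ at any $\tau\in\mathcal{T}$. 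A conditional application of \cite[Theorem~4.9]{Nutz2} on $[\tau,T]$ then furnishes, for each $\tau$, a predictable process $H'(\tau)$ supported on $(\tau,T]$ satisfying $\Phi_\tau-\underline{\mathcal{E}}_\tau(hg)+\sum_{s=\tau}^{T-1}H'(\tau)_s(S_{s+1}-S_s)\geq\Phi_\tau-hg$ $\mathcal{P}$-q.s. Splicing $H_t(\tau):=\tilde H_t\mathbf{1}_{\{t\leq\tau\}}+H'(\tau)_t\mathbf{1}_{\{t>\tau\}}$ gives an element of $\mathcal{H}'$ (non-anticipativity is automatic since the prefix $\tilde H$ does not depend on $\tau$), and concatenating the two inequalities delivers $U_0+(H(\tau)\cdot S)_T+hg\geq\Phi_\tau$, so $\overline\pi_h(\Phi)\leq U_0$.

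Given the duality, existence of $(H^*,h^*)$ under the added integrability hypotheses follows the template of \thref{sub}. Step~1 of that proof, which uses only NA$(\mathcal{P})$ and \cite[Theorem~4.9]{Nutz2}, places $0$ in the relative interior of $\{E_Q[g]:Q\in\mathcal{M}\}$; combined with $\sup_{Q}E_Q[|g|]<\infty$ and $\sup_{Q}E_Q[\max_{0\leq t\leq T}|\Phi_t|]<\infty$, this makes $h\mapsto\sup_\tau\sup_Q E_Q[\Phi_\tau-hg]$ a continuous, coercive convex function on $\R^e$, so the infimum in \eqref{supdual} is attained at some $h^*\in\R^e$; applying the splicing construction of the previous paragraph with $h=h^*$ then produces the required $H^*\in\mathcal{H}'$ satisfying \eqref{aaee}. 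Beyond the splicing itself, the technically delicate issue throughout is measurability: keeping $U_t$ upper-semianalytic through the interchange of $\sup_\tau$ and $\overline{\mathcal{E}}_t$, and choosing the selector $H'(\tau)$ jointly measurably in $(\omega,\tau)$, are exactly what \eqref{cc10} and the analyticity of $\mathrm{graph}(\mathcal{M}_t)$ are designed to guarantee.
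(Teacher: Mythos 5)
Your overall architecture coincides with the paper's: define the value process $U_t=\sup_{\tau\in\mathcal{T}_t}\overline{\mathcal{E}}_t(\Phi_\tau-hg)$ (the paper's $V_t$, equivalent to your backward recursion), verify its measurability and $\mathcal{M}$-supermartingale property via the assumption \eqref{cc10} and the analogue of \prref{cc3}, apply the non-dominated optional decomposition to obtain the ``prefix'' strategy $\tilde H$, then hedge the residual European claim $-hg$ from $\tau$ to $T$ and splice. The easy inequality, the existence of $h^*$ via Step~1/Step~2 of \thref{sub}, and the non-anticipativity observation are all as in the paper.

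The one place your argument has a real gap is the second hedging step. You propose ``a conditional application of \cite[Theorem~4.9]{Nutz2} on $[\tau,T]$'' to produce $H'(\tau)$, and then flag the joint measurable selection of $H'(\tau)$ in $(\omega,\tau)$ as the delicate point. Two comments. First, $\mathcal{H}'$ is merely a family indexed by $\tau\in\mathcal{T}$ with the non-anticipativity constraint, so joint measurability in $\tau$ is not required; that worry is a red herring. Second, and more importantly, ``applying Theorem~4.9 from time $\tau$'' is not a result that exists off the shelf when $\tau$ is a stopping time rather than a deterministic date; making it rigorous would require a conditional version of the superhedging duality with a measurable selection of strategies, which is exactly what you should avoid re-proving. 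The paper sidesteps this entirely: it sets $W_t:=\overline{\mathcal{E}}_t(-hg)$, notes via \prref{cc3} that $W$ is a u.s.a.\ $\mathcal{F}_t$-measurable $\mathcal{M}$-supermartingale, and applies the optional decomposition theorem \cite[Theorem~6.1]{Nutz2} a second time to obtain a \emph{single} process $H''\in\mathcal{H}$ with $W_\tau+(H''\cdot S)_{\tau,T}\geq W_T=-hg$ $\mathcal{P}$-q.s.\ for \emph{every} $\tau$. Taking $H'(\tau)_s:=H''_s$ for $s\geq\tau$ then yields your splicing $H_t(\tau)=\tilde H_t1_{\{t<\tau\}}+H''_t1_{\{t\geq\tau\}}$ with no conditional-duality or measurable-selection issues. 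Replacing your ``conditional Theorem~4.9'' step by this second application of optional decomposition closes the gap; with that change your proof and the paper's are essentially identical.
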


\begin{proof}
An argument similar to the one used in the proof of \thref{sub} implies that $\overline\pi(\Phi)=\inf_{h\in\mathbb{R}^e}\overline\pi(\Phi,h)$, where
$$\overline\pi(\Phi,h)=\inf\left\{x\in\mathbb{R}:\ \exists H\in\mathcal{H}',\ \text{ s.t. } x+(H\cdot S)_T+hg\geq \Phi_\tau,\ \mathcal{P}-q.s.,\ \forall\tau\in\mathcal{T}\right\}.$$
It is easy to see that $\overline\pi(\Phi,h)\geq\sup_{\tau\in\mathcal{T}}\sup_{Q\in\mathcal{M}}E_Q[\Phi_\tau-hg]$. In what follows we will demonstrate the reverse inequality. Define
\begin{equation}\label{V}
V_t=\sup_{\tau\in\mathcal{T}_t}\overline{\mathcal{E}}_t(\Phi_\tau-hg).
\end{equation}
Using assumption \eqref{cc10}, we apply \prref{cc3} to show that $V_t$ is u.s.a., $\mathcal{F}_t$-measurable and a super-martingale under each $Q \in \mathcal{M}$. As a result, we can apply the optional decomposition theorem for the nonlinear expectations \cite[Theorem 6.1]{Nutz2}, which implies that there exists $H'\in\mathcal{H}$, such that for any $\tau\in\mathcal{T}$, 
\begin{equation}\label{op}
V_0+(H'\cdot S)_\tau\geq V_\tau=\sup_{\rho\in\mathcal{T}_\tau}\overline{\mathcal{E}}_\tau(\Phi_\rho-hg)\geq\Phi_\tau+\overline{\mathcal{E}}_\tau(-hg),\quad \mathcal{P}-q.s.
\end{equation} 
Let us also define
$$W_t:=\overline{\mathcal{E}}_t(-hg).$$
Thanks to \prref{cc3}, we can apply \cite[Theorem 6.1]{Nutz2} again and get that there exists $H''\in\mathcal{H}$, such that for any $\tau\in\mathcal{T}$,
\begin{equation}\label{2}
W_\tau+(H''\cdot S)_{\tau,T}=\overline{\mathcal{E}}_\tau(-hg)+(H''\cdot S)_{\tau,T}\geq W_T=-hg,\quad \mathcal{P}-q.s.,
\end{equation}
where $(H''\cdot S)_{\tau,T}=\sum_{i=\tau}^{T-1} H_i''[S_{i+1}-S_i]$. Combining \eqref{op} and \eqref{2}, we get that
$$V_0+(H\cdot S)_T+hg\geq\Phi_\tau,\ \forall \tau\in\mathcal{T},\ \mathcal{P}-q.s.,$$
where $H_t=H_t'1_{\{t<\tau\}}+H_t''1_{\{t\geq\tau\}}$. Note that $H'$ in \eqref{op} is independent of $\tau$, which implies that $H$ is indeed in $\mathcal{H}'$. Hence, $V_0=\sup_{\tau\in\mathcal{T}}\sup_{Q\in\mathcal{M}}E_Q[\Phi_\tau-hg]\geq\overline\pi(\Phi,h)$.

As in the proof of \thref{sub}, there exists $h^*\in\mathbb{R}^e$ that is optimal for \eqref{supdual}:
$$\overline\pi(\Phi)=\sup_{\tau\in\mathcal{T}}\sup_{Q\in\mathcal{M}}E_Q[\Phi_\tau-h^*g]=\overline\pi(\Phi,h^*).$$
Also observe from the proof above that there exists $H^*\in\mathcal{H}'$, such that
$$\overline\pi(\Phi,h^*)+(H^*\cdot S)_T+h^*g\geq \Phi_\tau,\quad\mathcal{P}-q.s.,\ \forall\tau\in\mathcal{T},$$
which implies \eqref{aaee}.
\end{proof}

\begin{proposition}[A sufficient condition on the assumption \eqref{cc10} of \thref{supheg}]\label{semicti}
Assume that $\Phi_t$ is l.s.c. and bounded from below for $t=1,\dotso,T$. Then for $(\omega,P)\in\Omega_T\times\mathfrak{P}(\Omega_{T-t})$, the map $(\omega,P)\mapsto\sup_{\tau\in\mathcal{T}_t}E_P[\Phi_\tau(\omega^t,\cdot)]$ is l.s.c., and thus u.s.a, $t=1,\dotso,T$.
\end{proposition}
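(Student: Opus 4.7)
Since the map in question factors through the continuous projection $\omega\mapsto\omega^t$, it suffices to prove l.s.c.\ of $\tilde v_t:\Omega_t\times\mathfrak{P}(\Omega_{T-t})\to\R$ defined by $\tilde v_t(\omega^t,P):=\sup_{\tau\in\mathcal{T}_t}E_P[\Phi_\tau(\omega^t,\cdot)]$. Rather than the backward-induction/DP route (where the disintegration $P=P_1\otimes K_P$ fails to be weakly continuous in $P$), my plan is to express $\tilde v_t$ as a supremum of bounded continuous functionals via randomized stopping rules.

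First I would reduce to bounded continuous payoffs. Since $\Phi_s$ is l.s.c.\ and bounded below on the Polish space $\Omega_s$, it is the pointwise increasing limit of bounded continuous functions $\Phi_s^k\uparrow\Phi_s$ (for instance the Lipschitz regularizations $\inf_\eta[\Phi_s(\eta)+k\,d(\eta,\cdot)]\wedge k$). Monotone convergence and interchange of suprema give $\tilde v_t=\sup_k\tilde v_t^k$ where $\tilde v_t^k$ is built from $\Phi^k$, and since a supremum of l.s.c.\ functions is l.s.c., I may assume $\Phi_s$ is bounded continuous.

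Next, parameterize stopping rules by ``hazard rates'' $d=(d_s)_{s=t}^{T-1}$ with $d_s:\Omega_s\to[0,1]$ Borel, setting
$$u_s(\omega^s)=d_s(\omega^s)\prod_{r=t}^{s-1}(1-d_r(\omega^r)),\qquad u_T(\omega)=\prod_{r=t}^{T-1}(1-d_r(\omega^r)),$$
so that $u_s\ge0$, $\sum_s u_s\equiv1$ and $u_s$ is $\mathcal{B}(\Omega_s)$-measurable. A supermartingale estimate for the Snell envelope $U$ of $\Phi$ under $P$, namely $E_P[\sum_s u_sU_s]\le U_t(\omega^t)$ (proved by a one-step conditioning and induction on the horizon), combined with $U\ge\Phi$, gives $\sup_{d\text{ Borel}}E_P[\sum_s u_s\Phi_s(\omega^t,\cdot)]=U_t(\omega^t)=\tilde v_t(\omega^t,P)$; the reverse inequality is trivial since deterministic $\tau\in\mathcal{T}_t$ correspond to particular $d$'s. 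Then each Borel $d_s$ can be approximated in $P$-measure by a continuous $d_s^n\in C(\Omega_s;[0,1])$ (Lusin on the Polish space $\Omega_s$); the resulting $u_s^n$, being polynomials in $d^n$, are continuous and converge in $P$-measure to $u_s$, and dominated convergence (boundedness of $\Phi_s$) yields convergence of the expectations. Hence $\tilde v_t(\omega^t,P)=\sup_{d\text{ continuous}}E_P[\sum_s u_s(d;\omega^s)\Phi_s(\omega^s)]$.

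Finally, for each continuous $d$ the integrand $\sum_s u_s(d;\omega^s)\Phi_s(\omega^s)$ is bounded and continuous on $\Omega_T=\Omega_t\times\Omega_{T-t}$. A standard weak-convergence argument (tightness of the sequence plus uniform convergence on compacts coming from joint continuity of the integrand) shows that $(\omega^t,P)\mapsto E_P[\sum_s u_s(d;\omega^t,\cdot)\Phi_s(\omega^t,\cdot)]$ is jointly continuous; its supremum over continuous $d$ is therefore l.s.c., completing the proof. The most delicate step is the Snell-envelope identification $\sup_{\mathrm{rand}}=\sup_{\mathrm{det}}$; the key tool is the elementary one-step bound $E[\sum_s u_sU_s]\le u_tU_t+(1-u_t)E[\sum_{s>t}\tilde u_sU_s\,|\,\mathcal{F}_{t+1}]$ with $\tilde u_s=u_s/(1-u_t)$, combined with induction on the time horizon.
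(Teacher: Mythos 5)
Your proposal is correct, but it takes a genuinely different route from the paper's. The paper first assumes $\Phi$ uniformly continuous with modulus $\rho$, splits the difference
$$\sup_{\tau}E_{P^n}[\Phi_\tau((^n\omega)^t,\cdot)]-\sup_{\tau}E_{P}[\Phi_\tau(\omega^t,\cdot)]$$
into an $\omega$-part controlled by $-\rho(\|^n\omega-\omega\|)$ and a $P$-part handled by \emph{citing} a known result (Elton 1989, Theorem 1.1) that $P\mapsto\sup_\tau E_P[\Phi_\tau(\omega^t,\cdot)]$ is l.s.c.\ for l.s.c.\ payoffs; it then passes to general l.s.c.\ bounded-below $\Phi$ by a monotone approximation by uniformly continuous functions. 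In contrast, you avoid the external citation and the $\omega$-$P$ decoupling entirely: after the same reduction to bounded continuous payoffs, you represent $\tilde v_t$ as a supremum of \emph{jointly} continuous functionals of $(\omega^t,P)$, using randomized stopping rules with continuous hazard rates, a Snell-envelope/supermartingale identity to show that randomization does not increase the value, and Lusin's theorem to pass from Borel to continuous hazard rates. You thereby effectively re-prove (a joint-continuity version of) Elton's lemma in the same breath. The paper's route is shorter and cleaner given the citation; yours is self-contained, makes the continuity structure explicit, and does not rely on a uniform modulus of continuity to decouple the two arguments—which would be an advantage if one wanted the result in settings where such decoupling is awkward. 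Two small points you should make fully precise if writing this out: the supermartingale estimate $E_P[\sum_s u_s U_s]\le U_t$ needs the backward induction and the conditioning to be spelled out (you sketch the right one-step bound), and the tightness argument at the end should be stated carefully—joint continuity of the integrand plus weak convergence $P_n\to P$ and $\omega^t_n\to\omega^t$ gives uniform convergence on the product of a compact neighborhood of $\{\omega^t_n\}\cup\{\omega^t\}$ with a tight compact set in $\Omega_{T-t}$, which suffices, but this is exactly where a careless reader could go wrong on a non-compact Polish space.
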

\begin{proof}
If $\Phi$ is uniformly continuous in $\omega$ with modulus of continuity $\rho$, then for $(^n\omega,P^n)\rightarrow(\omega,P)$, we have that
\begin{eqnarray}
&&\sup_{\tau\in\mathcal{T}_t}E_{P^n}[\Phi_\tau((^n\omega)^t,\cdot)]-\sup_{\tau\in\mathcal{T}_t}E_P[\Phi_\tau(\omega^t,\cdot)]\notag\\
&&=\sup_{\tau\in\mathcal{T}_t}E_{P^n}[\Phi_\tau((^n\omega)^t,\cdot)]-\sup_{\tau\in\mathcal{T}_t}E_{P^n}[\Phi_\tau(\omega^t,\cdot)]+\sup_{\tau\in\mathcal{T}_t}E_{P^n}[\Phi_\tau(\omega^t,\cdot)]-\sup_{\tau\in\mathcal{T}_t}E_P[\Phi_\tau(\omega^t,\cdot)]\notag\\
&&\geq-\rho(||^n\omega-\omega||)+\sup_{\tau\in\mathcal{T}_t}E_{P^n}[\Phi_\tau(\omega^t,\cdot)]-\sup_{\tau\in\mathcal{T}_t}E_P[\Phi_\tau(\omega^t,\cdot)].\label{12}
\end{eqnarray}
Noting that the map $P\mapsto\sup_{\tau\in\mathcal{T}_t}E_P[\Phi_\tau(\omega^t,\cdot)]$ is l.s.c. (e.g, see in \cite[Theorem 1.1]{Elton89}), we know that the map $(P,\omega)\mapsto\sup_{\tau\in\mathcal{T}_t}E_P[\Phi_\tau(\omega^t,\cdot)]$ is l.s.c. by taking the limit in \eqref{12}. In general, if $\Phi_t$ be l.s.c. and bounded from below, then there exists uniformly continuous functions $(\Phi_t^n)_n$, such that $\Phi_t^n\nearrow \Phi_t$ pointwise (see e.g., \cite[Lemma 7.14]{Shreve}), $t=1,\dotso,T$. Therefore,
$$\sup_{\tau\in\mathcal{T}_t}E_P[\Phi_\tau(\omega^t,\cdot)]=\sup_{\tau\in\mathcal{T}_t}\sup_n E_P[\Phi_\tau^n(\omega^t,\cdot)]=\sup_n\sup_{\tau\in\mathcal{T}_t}E_P[\Phi_\tau^n(\omega^t,\cdot)],$$
which implies that the map $(\omega,P)\mapsto \sup_{\tau\in\mathcal{T}_t}E_P[\Phi_\tau(\omega^t,\cdot)]$ is l.s.c.
\end{proof}

\subsection{Comparison of several definitions of super-hedging}
In the duality result \eqref{supdual}, one would expect that $\overline\pi(\Phi)=\sup_{\tau\in\mathcal{T}}\sup_{Q\in\mathcal{Q}}E_Q[\Phi_\tau]$. More precisely, if the orders in \eqref{supdual} could be exchanged for then we would have 
\begin{equation}\notag
\overline\pi(\Phi)=\inf_{h\in\mathbb{R}^e}\sup_{\tau\in\mathcal{T}}\sup_{Q\in\mathcal{M}}E_Q[\Phi_\tau-hg]=\sup_{\tau\in\mathcal{T}}\sup_{Q\in\mathcal{M}}\inf_{h\in\mathbb{R}^e}E_Q[\Phi_\tau-hg]=\sup_{\tau\in\mathcal{T}}\sup_{Q\in\mathcal{Q}}E_Q[\Phi_\tau].
\end{equation}
But the latter is in fact equal to
\begin{equation}\label{eq:anshdf}
\hat\pi(\Phi):=\inf\{x\in\mathbb{R}:\ \forall\tau\in\mathcal{T},\exists(H,h)\in\mathcal{H}\times\mathbb{R}^e, \text{ s.t. } x+(H\cdot S)_T+hg\geq\Phi_\tau,\ \mathcal{P}-q.s.\}.
\end{equation}
That is,
\begin{equation}\label{e123}
\hat\pi(\Phi)=\sup_{\tau\in\mathcal{T}}\sup_{Q\in\mathcal{Q}}E_Q[\Phi_\tau].
\end{equation}
Since for the definition of $\hat\pi$ in \eqref{eq:anshdf} the seller knows the buyer's stopping strategy $\tau$ in advance (which is unreasonable for super-hedging), we may expect that in general it is possible $\overline\pi(\Phi)>\hat\pi(\Phi)$. We shall provide \exref{counter} showing $\overline\pi(\Phi)>\hat\pi(\Phi)$ at the end of this section.

An alternative way to define the super-hedging price is:
\begin{equation}\label{naive}
\tilde\pi(\Phi):=\inf\left\{x\in\mathbb{R}:\ \exists (H,h)\in\mathcal{H}\times\mathbb{R}^e,\ \text{ s.t. } x+(H\cdot S)_T+hg\geq \Phi_\tau,\ \mathcal{P}-q.s.,\ \forall\tau\in\mathcal{T}\right\}.
\end{equation}
However, this definition is not as useful since any reasonable investor would adjust her strategy after observing how the buyer of the option behaves. (In fact, $\mathcal{H}$ can be treated as a subset of $\mathcal{H}'$, and each element in $\mathcal{H}$ is indifferent to stopping strategies used by the buyer, and the non-anticipativity is automatically satisfied.) Due to the fact that for $\tilde\pi$ the seller fails to use the information of the realization of $\tau$, it could very well be the case that $\overline\pi(\Phi)<\tilde\pi(\Phi)$. We shall see in \exref{counter} that it is indeed the case.

If $\mathcal{P}$ is the set of all probability measures on a subset $\Omega'$ of $\Omega$, then under the definition of \eqref{naive}, super-hedging the American option is equivalent to super-hedging the lookback option $\max_{t\leq T}\Phi_t$. To wit, suppose for $x\in\mathbb{R}$ and $(H,h)\in\mathcal{H}\times\mathbb{R}^e$, we have that
\begin{equation}\label{a9}
x+(H\cdot S)_T+hg\geq \Phi_\tau, \ \forall s\in\Omega',\ \forall\tau\in\mathcal{T},
\end{equation}
and
$$x+(H\cdot S)_T+hg<\max_{t\leq T}\Phi_t, \ \text{ along some path } s^*=(s_0^*=1,s_1^*,\dotso,s_T^*)\in\Omega'.$$
Let $t^*=\argmax_{t\leq T}\Phi_t(s^*)$ and define $\tau^*\in\mathcal{T}$ with the property that $\tau(s^*)=t^*$, i.e., the holder of the American option will stop at time $t^*$ once she observes $(s_0^*,\dotso,s_{t^*}^*)$ happens.  Then \eqref{a9} does not hold if we take $\tau=\tau^*$ and $s=s^*$.  So the super-hedging price under the definition of \eqref{naive} is:
$$\tilde\pi(\Phi)=\sup_{Q\in\mathcal{Q}}E_Q\left[\max_{t\leq T}\Phi_t\right].$$

\exref{counter} below shows that it is possible that $\hat\pi(\Phi)<\overline\pi(\Phi)<\tilde\pi(\Phi)$, which indicates that the super-hedging definitions in \eqref{eq:anshdf} and \eqref{naive} are unreasonable.

\begin{example}\label{counter}
We will use the set-up in \exref{counterexample}. An easy calculation shows that
\begin{eqnarray}
\overline\pi(\Phi)&=&\inf_{h\in\mathbb{R}}\sup_{Q\in\mathcal{M}}\sup_{\tau\in\mathcal{T}}E_Q[\Phi_\tau-hg]=\inf_{h\in\mathbb{R}}\sup_{0\leq p, q\leq 1/2}\left[\frac{p}{2}\vee\frac{1}{8}+q\vee\frac{1}{3}-h\left(\frac{p}{2}+\frac{q}{2}-\frac{1}{3}\right)\right]\notag\\
&=&\inf_{h\in\mathbb{R}}\left[\left(\frac{11}{24}+\frac{h}{3}\right)\vee\left(\frac{5}{8}+\frac{h}{12}\right)\vee\left(\frac{7}{12}+\frac{h}{12}\right)\vee\left(\frac{3}{4}-\frac{h}{6}\right)\right]=\frac{2}{3},\notag
\end{eqnarray}
where the infimum is attained when $h=1/2$. On the other hand,
$$\tilde\pi(\Phi)=\sup_{Q\in\mathcal{Q}}E_Q\left[\max_{t\leq T}\Phi_t\right]=\sup_{\substack{0\leq p, q\leq 1/2\\p+q=2/3}}\left(\frac{3}{8}p+\frac{2}{3}q+\frac{11}{24}\right)=\frac{41}{48}>\overline\pi(\Phi),$$
and
$$\hat \pi (\Phi)=\sup_{\tau\in\mathcal{T}}\sup_{Q\in\mathcal{Q}} E_Q[\Phi_\tau]=\sup_{\substack{0\leq p, q\leq 1/2\\p+q=2/3}}\left(\frac{p}{2}\vee\frac{1}{8}+q\vee\frac{1}{3}\right)=\frac{5}{8}<\overline\pi(\Phi).$$
\end{example}

\section{Approximating the hedging-prices by discretizing the path space}

In this section, we take $\mathcal{P}$ to be the set of all the probability measures on $\Omega$ and consider the hedging problems path-wise. We will make the same no-arbitrage assumption and also assume that no hedging option is redundant (see Assumption~\ref{ass:SA}(ii)). We will discretize the path space to obtain a discretized market, and show that the hedging prices in the discretized market converges to the original ones. We also get the rate of convergence. Theorems \ref{main} and \ref{main1} are the main results of this section.

%
 We will now collect some notation that will be used in the rest of this section. The meaning of some of the parameters will become clear when they first appear in context.
  \subsection{Notation}\label{subsec:notation}
\begin{itemize}
\item $\Omega=\{1\}\times[a_1,b_1]\times\dotso\times[a_T,b_T]$, where $0\leq a_T<\dotso<a_1<1<b_1<\dotso<b_T<\infty$. (This means that the wingspan of the discrete-time model is growing as for example it does in a binomial tree market.)
\item $\Omega^n=\Omega\cap\{0,1/2^n,2/2^n,\dotso\}^{T+1}$.
\item $\mathcal{P}$ all the probability measures on $\Omega$.
\item $\mathcal{P}^n$ all the probability measures on $\Omega^n$.
\item $\mathcal{Q}:=\{\mathbb{Q} \text{ martingale measure on } \Omega:\ \mathbb{E}_\mathbb{Q}g_i=0,\ i=1,\dotso,e\}$.
\item $\mathcal{Q}^n:=\{\mathbb{Q} \text{ martingale measure on } \Omega^n:\ \mathbb{E}_\mathbb{Q}g_i=c_i^n,\ i=1,\dotso,e\}$.
\item $\mathcal{H}$ is the set of trading strategies $H=(H_i)_{i=0}^{T-1}$ consists of functions $H_i$ defined on $\prod_{j=1}^i[a_i,b_i],\ i=0,\dotso,T-1$.
\item $\mathcal{H}^n$ is the set of trading strategies $H=(H_i)_{i=0}^{T-1}$ consists of functions $H_i$ defined on $\prod_{j=1}^i[a_j^n,b_j^n]\cap\{0,1/2^n,2/2^n,\dotso\}^i,\ i=0,\dotso,T-1$.
\item $\mathcal{T}$ is the set of stopping times $\tau: \Omega\rightarrow\{0,1,\dotso,T\}$, i.e., for $k=0,1,\dotso,T,\ s^j=(s_0^j,\dotso,s_T^j)\in \Omega,\ j=1,2$,
$$\text{ if }\tau(s^1)=k, \text{ and }s_i^1=s_i^2, \ i=0,\dotso,k, \text{ then } \tau(s^2)=k.$$
\item $\mathcal{T}^n$ is the set of stopping times $\tau: \Omega^n\rightarrow\{0,1,\dotso,T\}.$
\item $\mathcal{H}':=\{H:\mathcal{T}\mapsto \mathcal{H},\text{ s.t. }H_t(\tau^1)=H_t(\tau^2),\ \forall t<\tau^1\wedge\tau^2\}$.
\item ${\mathcal{H}^n}':=\{H:\mathcal{T}^n\mapsto \mathcal{H}^n,\text{ s.t. }H_t(\tau^1)=H_t(\tau^2),\ \forall t<\tau^1\wedge\tau^2\}$.
\item $|\cdot|$ represents the sup norm in various cases.
\item $\mathbb{D}=\cup_n\{0,1/2^n,2/2^n,\dotso\}$.
\end{itemize}

\subsection{Original market}
We restrict the price process, denoted by $S=(S_0,\dotso,S_T)$, to take values in some compact set $\Omega$. In other words,  we take $S$ to be the canonical process $S_i(s_0,\dotso,s_T)=s_i$ for any $(s_0,\dotso,s_T)\in\Omega$, and denote by $\{\mathcal{F}_i\}_{i=1,\dotso,T}$ the natural filtration generated by $S$. The options $(g_i)_{i=1}^e$, which can be bought at price 0, and the American option $\Phi$ are continuous.
 We assume that NA($\mathcal{P}$) holds and that no hedging option is redundant, i.e., it cannot be replicated by the stock and other options available for static hedging. Besides, from the structure of $\Omega$, we know that for $H\in\mathcal{H}$, if $(H\cdot S)_T\geq 0,\ \forall s\in\Omega$, then $H\equiv 0$. Thus, we will make the following standing assumption. 
 
\begin{assumption}\label{ass:SA} (i) $g$ and $\Phi$ are continuous. (ii) For any $(H,h)\in\mathcal{H}\times\mathbb{R}^e$, if $h\neq0$, then there exists $s\in \Omega$, such that along the path $s$, 
$$(H\cdot S)_T+hg<0.$$
\end{assumption}

\begin{example}
Consider the market with $\Omega=\{1\}\times[2/3,4/3]\times[1/3,5/3]$, with a European option $(S_2-1)^+-1/5$ that can be traded at price $0$. A simple calculation can show that Assumption~\ref{ass:SA} is satisfied. 
\end{example}

We consider the sub-hedging price $\underline\pi(\Phi)$ and the super-hedging price $\overline\pi(\Phi)$ with respect to $(\Omega,\mathcal{P})$, i.e.,
\begin{equation}\notag
\underline\pi(\Phi):=\sup\left\{x\in\mathbb{R}:\ \exists (H,\tau,h)\in\mathcal{H}\times\mathcal{T}\times\mathbb{R}^e,\ \text{ s.t. } \Phi_\tau+(H\cdot S)_T+hg\geq x,\ \forall s\in\Omega\right\},
\end{equation}
and
$$\overline\pi(\Phi):=\inf\left\{x\in\mathbb{R}:\ \exists (H,h)\in\mathcal{H}'\times\mathbb{R}^e,\ \text{ s.t. } x+(H\cdot S)_T+hg\geq \Phi_\tau,\ \forall s\in\Omega,\ \forall\tau\in\mathcal{T}\right\}.$$
Recall that $\underline\pi(\Phi)$ and $\overline\pi(\Phi)$ satisfy the dualities in \eqref{dual1} and \eqref{supdual} respectively.

\subsection{Discretized market}

For simplicity, we assume that $a_i,\ b_i\in\mathbb{D},\ i=1,\dotso,T$, in the notation of $\Omega$, and we always start from $n$ large enough, such that $\Omega^n$ has the end points $a_i,b_i$ at each time $i$. Let $\{c^n=(c_1^n,\dotso,c_e^n)\}_n$ be a sequence such that $|c^n|\rightarrow 0$.  Now for each $n$, consider the following discretized market:  The stock price process takes values in the path space $\Omega^n$, and the options $(g_i)_{i=1}^e$ can be traded at the beginning at price $(c_i^n)_{i=1}^e$. 

We consider the sub-hedging price $\underline\pi^n(\Phi)$ and the super-hedging price $\overline\pi^n(\Phi)$ with respect to $(\Omega^n,\mathcal{P}^n)$, i.e.,
\begin{equation}\notag
\underline\pi^n(\Phi):=\sup\left\{x\in\mathbb{R}:\ \exists (H,\tau,h)\in\mathcal{H}^n\times\mathcal{T}^n\times\mathbb{R}^e,\ \text{ s.t. } \Phi_\tau+(H\cdot S)_T+hg\geq x,\ \forall s\in\Omega^n\right\},
\end{equation}
and
$$\overline\pi^n(\Phi):=\inf\left\{x\in\mathbb{R}:\ \exists (H,h)\in{\mathcal{H}^n}'\times\mathbb{R}^e,\ \text{ s.t. } x+(H\cdot S)_T+hg\geq \Phi_\tau,\ \forall s\in\Omega^n,\ \forall\tau\in\mathcal{T}^n\right\}.$$
Recall that $\underline\pi^n(\Phi)$ and $\overline\pi^n(\Phi)$ satisfy the dualities in \eqref{dual1} and \eqref{supdual} respectively.

\begin{remark}
Assuming $a_i, b_i\in\mathbb{D}$ and the points in $\Omega^n$ is equally spaced is without loss of generality. In fact, as long as $\Omega^n\cap \Omega$ are increasing and $\overline{\cup_n(\Omega^n\cap \Omega)}=\Omega$, we will have the same results with only a little adjustment in the proofs.
\end{remark}

\subsection{Consistency}\label{con}
The following theorem states that for $n$ large enough, the discretized market is well defined, i.e., NA$(\mathcal{P}^n)$ holds.  
\begin{theorem}\label{consist}
For $n$ large enough, NA$(\mathcal{P}^n)$ holds.
\end{theorem}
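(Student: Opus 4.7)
The plan is as follows. Since $\Omega^n$ is finite, $\mathcal{P}^n$ is trivially dominated, and the classical FTAP for dominated markets with a finite number of traded options gives NA$(\mathcal{P}^n)\Longleftrightarrow\mathcal{Q}^n\neq\emptyset$. My aim therefore reduces to showing, for all $n$ large, that the prescribed prices $c^n$ lie in $K^n:=\{E_Q[g]:Q\in\mathcal{M}^n\}$; any $Q^n\in\mathcal{M}^n$ realizing $c^n$ then lies in $\mathcal{Q}^n$.

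First I would upgrade Step~1 in the proof of \thref{sub} (which gave only the relative interior) to show that, under Assumption~\ref{ass:SA}(ii), $0$ belongs to the \emph{topological} interior of $K:=\{E_Q[g]:Q\in\mathcal{M}\}\subset\mathbb{R}^e$. If not, since $0\in K$ (because $\mathcal{Q}\subset\mathcal{M}$), some $h\neq 0$ would satisfy $E_Q[hg]=0$ for all $Q\in\mathcal{M}$; applying \cite[Theorem~4.9]{Nutz2} to $\pm hg$ would produce $H,H'\in\mathcal{H}$ with $(H\cdot S)_T\geq hg$ and $(H'\cdot S)_T\geq -hg$ pointwise on $\Omega$, so NA$(\mathcal{P})$ (with $h=0$) would force $((H+H')\cdot S)_T\equiv 0$ and hence $(H\cdot S)_T=hg$ pointwise on $\Omega$; but then Assumption~\ref{ass:SA}(ii) applied to $(-H,h)$ would furnish a path on which $(-H\cdot S)_T+hg<0$, a contradiction. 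Consequently, a closed ball $\bar B_\delta(0)\subset K$ can be written as the convex hull of finitely many points $\{E_{Q^{(i)}}[g]\}_{i=1}^N$ with $Q^{(i)}\in\mathcal{M}$.

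The core step, and the main obstacle, is to show that for every $Q\in\mathcal{M}$ there exist $Q^n\in\mathcal{M}^n$ with $Q^n\to Q$ weakly, so that $E_{Q^n}[g]\to E_Q[g]$ by continuity of $g$ on the compact $\Omega$. My plan is to use the standard two-point martingale rounding of the conditional kernels of $Q$: fix regular conditional distributions $\mu_t(\cdot;\cdot)$ of $Q$ and, at each grid node $(s_0^n,\dots,s_t^n)\in\Omega^n_t\subset\Omega_t$, push $\mu_t(s_0^n,\dots,s_t^n;\cdot)$ through the map sending $s\in[a_{t+1},b_{t+1}]$ to the Bernoulli law on its two nearest grid neighbours $s^-\leq s\leq s^+$ with weights $(s^+-s)/(s^+-s^-)$ and $(s-s^-)/(s^+-s^-)$. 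This preserves conditional means, so the resulting product measure $Q^n$ is automatically a martingale on $\Omega^n$. The subtlety is that regular conditional distributions are defined only $Q$-a.e., so that the kernels need not have the correct mean at an arbitrary grid point; I would handle this by first approximating $Q$ in the weak topology by a finitely supported martingale measure (whose kernels are defined at every point), discretizing the latter by the above rounding, and then concluding via a diagonal argument.

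Combining the above, approximate each $Q^{(i)}$ from the second paragraph by $Q^{(i),n}\in\mathcal{M}^n$. Since $|c^n|\to 0$ and the map from finite tuples of measures to the convex hull of their $g$-expectations is continuous, for $n$ large $c^n$ lies in $\operatorname{conv}\{E_{Q^{(i),n}}[g]\}_{i=1}^N\subset K^n$. Any convex combination $Q^n=\sum_i\lambda_i^n Q^{(i),n}\in\mathcal{M}^n$ realizing $c^n$ then belongs to $\mathcal{Q}^n$, which proves NA$(\mathcal{P}^n)$.
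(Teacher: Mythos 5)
Your reduction of NA$(\mathcal{P}^n)$ to $\mathcal{Q}^n\neq\emptyset$ is the first gap. Since $\mathcal{P}^n$ is the set of \emph{all} probability measures on the finite set $\Omega^n$, the FTAP of \cite{Nutz2} gives that NA$(\mathcal{P}^n)$ is equivalent to the statement that for every $P\in\mathcal{P}^n$ there exists $Q\in\mathcal{Q}^n$ with $P\ll Q$; taking $P$ to be the uniform measure, this is the existence of a martingale measure in $\mathcal{Q}^n$ with \emph{full support} on $\Omega^n$, which is strictly stronger than $\mathcal{Q}^n\neq\emptyset$. (One can have $\mathcal{Q}^n\neq\emptyset$ while every element of $\mathcal{Q}^n$ is concentrated on a proper subset of $\Omega^n$; then the complement is polar for $\mathcal{Q}^n$ and an arbitrage supported there exists.) This is exactly the discrepancy the paper flags when it remarks that the no-arbitrage notions of \cite{Schachermayer2} and \cite{Nutz2} differ once options are traded. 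The step could perhaps be repaired by showing $c^n$ lies in the \emph{interior} of $\{E_Q[g]:Q\text{ martingale on }\Omega^n\}$ and mixing with a full-support martingale on $\Omega^n$ (which exists since $\Omega^n$ is a reasonable path space in the sense of Appendix~\ref{section:appD}), but as stated the equivalence is wrong.

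The ``core step'' is also left genuinely unfilled, and the sketched construction does not work. Rounding the regular conditional kernel $\mu_t(s^n;\cdot)$ of $Q$ at a grid node $s^n$ does preserve its conditional mean, but that mean equals the grid value $s^n_t$ only where the martingale property holds pointwise, and a generic grid node is $Q$-null, so the rounded kernels do not glue to a martingale on $\Omega^n$. Your proposed fix---first approximate $Q$ weakly by a finitely supported martingale $Q'$---does not resolve this: the finitely many paths supporting $Q'$ generically have no point in common with the grid, so with the natural convention the kernels of $Q'$ at grid nodes beyond time~$0$ are degenerate, and the rounding collapses $Q'^{\,n}$ to (essentially) a Dirac mass on a single nearly constant path, which certainly does not converge to $Q'$. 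To make a kernel-rounding approach work one would need a genuine martingale coupling between $\Omega$-paths and $\Omega^n$-paths (e.g.\ carrying the compensating shift $S^n_t-S_t$ forward so that the rounded process is a martingale in its own filtration, with special care near the endpoints $a_t,b_t$ where the shift pushes the center outside $[a_{t+1},b_{t+1}]$); this is the real technical content and the plan does not supply it.

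The paper's proof takes a quite different, and simpler, route that avoids constructing any discretized martingale measure. It argues by contradiction on the primal (strategy) side: if NA$(\mathcal{P}^n)$ failed along a subsequence, the arbitrage pairs $(H^n,h^n)$ would satisfy $h^n\neq 0$; after normalizing $|h^n|=1$, Lemma~\ref{lemma 4.1} gives a uniform bound on $|H^n|$, a diagonal extraction yields a limit $(H,h)$ with $|h|=1$ and $(H\cdot S)_T+hg\geq 0$ on the dense set $\Omega\cap\mathbb{D}^{T+1}$, and a sequential extension of $H$ to all of $\Omega$ preserves the inequality, contradicting Assumption~\ref{ass:SA}(ii). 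Your first paragraph---upgrading Step~1 of Theorem~\ref{sub} from relative interior to topological interior of $\{E_Q[g]:Q\in\mathcal{M}\}$ via Assumption~\ref{ass:SA}(ii)---is a correct and worthwhile observation in its own right, but the remaining steps of the plan contain the gaps described above.
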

\begin{proof}
If not, then there exists $(H^n,h^n)\in\mathcal{H}^n\times\mathbb{R}^e$, such that
\begin{equation}\label{a2}
(H^n\cdot S)_T+h^n(g-c^n)\geq 0,\ \ \ \forall s\in\Omega^n,
\end{equation}
and is strictly positive along some path in $\Omega^n$. Obviously, $h^n\neq 0$, so without loss of generality we will assume that $|h^n|=1$. On the other hand, since $g$ is continuous on a compact set it is bounded.  Then there exists a constant $C>0$ independent of $n$, such that
\begin{equation}\label{a3}
(H^n\cdot S)_T>-C.
\end{equation}

We will need the following result in order to carry out the proof of the theorem. We preferred to separate this result from the proof of the theorem since it will be used again in the proof of the convergence result.

\begin{lemma}\label{lemma 4.1}
If $(H^n\cdot S)_T>-C$, then there exists a constant $M=M(C)>0$ independent of $n$, such that $|H^n| \leq M$.
\end{lemma}
\begin{proof}
Let $\alpha:=\min_{1\leq i\leq T}\{a_{i-1}-a_i,b_i-b_{i-1}\}>0$, with $a_0:=b_0:=1$. We will prove this by an induction argument. Take the path $(s_0=1,s_1=a_1,s_2=a_1,\dotso, s_T=a_1)$, then \eqref{a3} becomes
$$H_0^n(a_1-1)>-C,$$
which implies $H_0^n<C/\alpha$. Similarly, we can show that $H_0^n>-C/\alpha$ by taking the path $(s_0=1,s_1=b_1,s_2=b_1,\dotso, s_T=b_1)$. Hence, $H_0^n$ is bounded uniformly in $n$. Now assume there exists $K=K(C)>0$ independent of $n$, such that $|H_j^n|\leq K,\ j\leq i-1\leq T-1$. Since $\Omega^n$ is uniformly bounded and by the induction hypothesis, we have that
$$\sum_{j=i}^{T-1}H_j^n(s_1,\dotso,s_j)(s_{j+1}-s_j)>-C',$$
where $C'>0$ only depends on $C$. For any $(s_1,\dotso,s_i)\in\prod_{j=1}^i([a_j,b_j]\cap\{k/2^n,\ k\in\mathbb{N}\})$, by taking the paths $(1,s_1,\dotso,s_i, s_{i+1}=a_{i+1},\dotso, s_T=a_{i+1})$ and $(1,s_1,\dotso,s_i, s_{i+1}=b_{i+1},\dotso, s_T=b_{i+1})$, we can show that $|H_i^n(s_1,\dotso,s_i)|\leq C'/\alpha$.
\end{proof}

\noindent \emph{Proof of Theorem~\ref{consist} continued.}
We proved in Lemma~\ref{lemma 4.1}  that $|H^n|\leq M$ for some $M>0$ independent of $n$. By a standard selection (using a diagonalization argument, e.g., see \cite[Page 307]{Resnick}), we can show that there exists a subsequence $(H^{n_k},h^{n_k})\stackrel{|\cdot|}{\rightarrow}(H,h)$, where $H=(H_i)_{i=0}^{T-1}$ consists of functions $H_i$ defined on $\prod_{j=1}^i([a_j,b_j]\cap\mathbb{D}),\ i=0,\dotso,T-1,$ with $|H|\leq M$, and $h\in\mathbb{R}^e$ with $|h|=1$. By taking the limit on both sides of \eqref{a2} along $(n_k)$, we have
\begin{equation}\label{dense}
(H\cdot S)_T+hg\geq0,\ \ \ \forall s\in\Omega\cap\mathbb{D}^{T+1}.
\end{equation}
If we can extend the domain of function $H$ from $\Omega\cap\mathbb{D}^{T+1}$ to $\Omega$, such that the inequality \eqref{dense} still holds on $\Omega$, we would obtain a contradiction to Assumption~\ref{ass:SA} since $h\neq 0$. 

Define 
$$\tilde\Omega_i=\{1\}\times[a_1,b_1]\times\dotso\times[a_i,b_i]\times\big([a_{i+1},b_{i+1}]\cap\mathbb{D}\big)\times\dotso\times\big([a_T,b_T]\cap\mathbb{D}\big)$$
for $i=1,\dotso,T-1$. We will do the extension inductively as follows (the notation for $H$ will not be changed during the extension):\\
(i) For each $s_1\in[a_1,b_1]\setminus\mathbb{D}$, using the standard selection argument, we can choose $[a_1,b_1]\cap\mathbb{D}\ni s_1^n\rightarrow s_1$, such that for any $j\in\{1,\dotso,T-1\}$ and $(s_2,\dotso,s_j)\in\prod_{k=2}^j\big([a_k,b_k]\cap\mathbb{D}\big)$, the limit $\lim_{n\rightarrow\infty}H(s_1^n,s_2,\dotso,s_j)$ exists. Define 
$$H_j(s_1,\dotso,s_j):=\lim_{n\rightarrow\infty}H_j(s_1^n,s_2,\dotso,s_j).$$
Then we extend the domain of $H$ to $\tilde\Omega_1$. It's easy to check that \eqref{dense} still holds on $\tilde\Omega_1$.\\
(ii) In general, assume that we have already extended the domain of $H$ to $\tilde\Omega_i,\ i\leq T-2$, such that \eqref{dense} holds on it. Then for each $(s_1,\dotso,s_i)\in\prod_{j=1}^i[a_j,b_j]$ and $s_{i+1}\in[a_{i+1},b_{i+1}]\setminus\mathbb{D}$, performing the same selection and extension as in (i) (we fix $(s_1,\dotso,s_i)$ while doing the selection), we can see that \eqref{dense} still holds on $\tilde\Omega_{i+1}$.\\
Therefore, we can extend $H$ to $\tilde\Omega_{T-1}$, such that \eqref{dense} holds. Clearly, \eqref{dense} also holds on $\Omega$.
\end{proof}

\subsection{Convergence} We shall prove the convergence result for sub-hedging (\thref{main}). The super-hedging case is similar, and thus we shall only provide the corresponding result (\thref{main1}) without proof.
\begin{lemma}\label{lemma 4.2}
For $(H^n,\tau^n,h^n)\in\mathcal{H}^n\times\mathcal{T}^n\times\mathbb{R}^e$, if for $x \in \mathbb{R}$
\begin{equation}\label{bd}
\Phi_{\tau^n}+(H^n\cdot S)_T+h^n(g-c^n)\geq x,\ \ \ \forall s\in\Omega^n,
\end{equation}
then $(H^n)_n$ and $(h^n)_n$ are bounded. 
\end{lemma}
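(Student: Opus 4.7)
The plan is to mimic closely the argument already used in the proof of Theorem~\ref{consist}, reducing Lemma 4.2 to Lemma~\ref{lemma 4.1} and Assumption~\ref{ass:SA}(ii) after a normalization step. The first observation is that since $\Phi$ and $g$ are continuous on the compact set $\Omega$, they are uniformly bounded, and $(c^n)_n$ is bounded because $|c^n|\to 0$. Therefore there is a constant $C_1>0$, independent of $n$, such that $\Phi_{\tau^n}\le C_1$ on $\Omega^n$, and \eqref{bd} rewrites as
$$(H^n\cdot S)_T+h^n(g-c^n)\ge x-C_1,\qquad \forall s\in\Omega^n.$$

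Next I would bound $(h^n)$. Suppose for contradiction that $|h^{n_k}|\to\infty$ along some subsequence. Dividing the above inequality by $|h^{n_k}|$ and setting $\bar h^{n_k}:=h^{n_k}/|h^{n_k}|$ and $\bar H^{n_k}:=H^{n_k}/|h^{n_k}|$, one obtains
$$(\bar H^{n_k}\cdot S)_T+\bar h^{n_k}(g-c^{n_k})\ge \frac{x-C_1}{|h^{n_k}|},\qquad\forall s\in\Omega^{n_k}.$$
Since $|\bar h^{n_k}|=1$ and $g-c^{n_k}$ is uniformly bounded, $(\bar H^{n_k}\cdot S)_T$ is bounded below by some constant independent of $k$. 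Then Lemma~\ref{lemma 4.1} applies and gives $|\bar H^{n_k}|\le M$ for some $M>0$ independent of $k$.

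Now I would extract a further subsequence (using the standard diagonalization argument from the proof of Theorem~\ref{consist}) along which $\bar h^{n_k}\to\bar h$ with $|\bar h|=1$, and $\bar H^{n_k}\to H$ pointwise on $\Omega\cap\mathbb{D}^{T+1}$ with $|H|\le M$. Passing to the limit yields
$$(H\cdot S)_T+\bar h\, g\ge 0,\qquad \forall s\in\Omega\cap\mathbb{D}^{T+1}.$$
Applying verbatim the inductive extension procedure from the proof of Theorem~\ref{consist} (extending $H$ from $\Omega\cap\mathbb{D}^{T+1}$ to $\tilde\Omega_1,\dotsc,\tilde\Omega_{T-1}$ and finally to $\Omega$), the inequality persists on all of $\Omega$. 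Since $\bar h\ne 0$, this contradicts Assumption~\ref{ass:SA}(ii), so $(h^n)$ must be bounded.

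Finally, with $(h^n)$ bounded and $g,c^n,\Phi_{\tau^n}$ all uniformly bounded, \eqref{bd} gives
$$(H^n\cdot S)_T\ge x-\Phi_{\tau^n}-h^n(g-c^n)\ge -C_2$$
for some constant $C_2>0$ independent of $n$, and a direct application of Lemma~\ref{lemma 4.1} yields $|H^n|\le M'$ uniformly in $n$. The only genuinely delicate step is the limit-extraction combined with the extension from the dyadic rationals to all of $\Omega$, but this is precisely the routine already executed inside the proof of Theorem~\ref{consist}, so it can be invoked with only cosmetic changes.
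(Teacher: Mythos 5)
Your proof is correct and follows essentially the same route as the paper's: normalize by $|h^{n_k}|$, invoke Lemma~\ref{lemma 4.1} to bound the normalized strategies, extract a convergent subsequence by diagonalization, pass to the limit on the dyadics, extend to $\Omega$ as in the proof of Theorem~\ref{consist}, and contradict Assumption~\ref{ass:SA}(ii). The only presentational difference is that you merge the paper's explicit case split (``$|H^n|/|h^n|$ unbounded'' vs.\ ``bounded'') into a single step, observing directly that Lemma~\ref{lemma 4.1} forces $|\bar H^{n_k}|$ to be bounded; this is a cleaner but substantively identical argument.
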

\begin{proof}
We first show that $(h^n)_n$ are bounded. If not, by extracting a subsequence, we can without loss of generality assume that $0<\beta<|h^n|\rightarrow\infty$. We consider two cases:\\
(a) $|H^n|/|h^n|$ is not bounded. Then we can rewrite \eqref{bd} as
$$\left(\frac{H^n}{|h^n|}\cdot S\right)_T\geq-\frac{h^n}{|h^n|}(g-c^n)+\frac{1}{|h^n|}\Phi_{\tau^n}+\frac{x}{|h^n|},\ \forall s\in\Omega^n.$$
Since $g$ and $\Phi$ are continuous on a compact set, they are bounded. Hence, there exists $C>0$, such that
$$\left(\frac{H^n}{|h^n|}\cdot S\right)_T\geq-C,$$
which contradicts with \leref{lemma 4.1}.\\
(b) $|H^n|/|h^n|$ is bounded. Let us rewrite \eqref{bd} as
$$\left(\frac{H^n}{|h^n|}\cdot S\right)_T+\frac{h^n}{|h^n|}(g-c^n)\geq\frac{x+\Phi_{\tau^n}}{|h^n|},\ \forall s\in\Omega^n.$$
Since $(x+\Phi_{\tau^n})/|h^n|\rightarrow 0$, we can follow the proof of \thref{consist} to get a contradiction with Assumption~\ref{ass:SA}.

Next we show that $(H^n)_n$ is a bounded collection. Let us rewrite \eqref{bd} as 
$$(H^n\cdot S)_T\geq-\Phi_{\tau^n}-h^n(g-c^n)+x,\ \ \ \forall s\in\Omega^n.$$
Since $(h^n)_n$ and $(g-c^n)_n$ are bounded, then right-hand-side is bounded. Therefore, the conclusion follows from \leref{lemma 4.1}. 
\end{proof}
\begin{proposition}
For $n$ large enough, there exists some $N>0$ independent of $n$, such that
\begin{equation}\label{subNn}
\underline\pi^n(\Phi)=\sup\left\{x\in\mathbb{R}:\ \exists (H,\tau,h)\in\mathcal{H}^n\times\mathcal{T}^n\times\mathbb{R}^e,\ |H|,|h|\leq N,\ \text{ s.t. } \Phi_\tau+(H\cdot S)_T+hg\geq x,\ \forall s\in\Omega^n\right\}.
\end{equation}
and
\begin{equation}\label{subN}
\underline\pi(\Phi)=\sup\left\{x\in\mathbb{R}:\ \exists (H,\tau,h)\in\mathcal{H}\times\mathcal{T}\times\mathbb{R}^e,\ |H|,|h|\leq N,\ \text{ s.t. } \Phi_\tau+(H\cdot S)_T+hg\geq x,\ \forall s\in\Omega\right\}.
\end{equation}
\end{proposition}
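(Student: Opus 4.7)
The overall plan is to show that any nearly-optimal sub-hedging strategy $(H,\tau,h)$—in the original market or in the discretized market for $n$ large—must automatically satisfy $|H|,|h|\leq N$ for a constant $N$ independent of $n$. Once this is proved, restricting the supremum to such bounded strategies does not change its value.

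First I would check that both $\underline\pi(\Phi)$ and $\{\underline\pi^n(\Phi)\}_n$ lie in a fixed bounded interval. The continuity of $\Phi$ on the compact $\Omega$ (Assumption~\ref{ass:SA}(i)) gives $M_\Phi:=\sup_{t,s}|\Phi_t(s)|<\infty$. The trivial strategy $(H,\tau,h)=(0,0,0)$ supplies the lower bound $\underline\pi^n(\Phi),\,\underline\pi(\Phi)\geq -M_\Phi$, while any $x>M_\Phi$ would produce a strategy with $(H\cdot S)_T+h(g-c^n)\geq x-M_\Phi>0$ pointwise, contradicting NA$(\mathcal{P}^n)$ (valid for $n$ large by \thref{consist}); the analogous reasoning with $c^n=0$ applies to $\underline\pi(\Phi)$.

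Next I would establish the uniform bound $N$ via a contradiction that mimics the proof of \leref{lemma 4.2}. Suppose no such $N$ works for the discretized markets: one can extract $n_k\geq k$ and $(H^k,\tau^k,h^k)\in \mathcal{H}^{n_k}\times \mathcal{T}^{n_k}\times \mathbb{R}^e$ with $\max(|H^k|,|h^k|)>k$ satisfying the sub-hedging inequality at some $x_k \in [-M_\Phi-1,\,M_\Phi]$. The proof of \leref{lemma 4.2} only touches $x$ through the quantities $x/|h^n|$ and $(x+\Phi_{\tau^n})/|h^n|$; both remain bounded (and still tend to $0$ when $|h^k|\to\infty$) after substituting the bounded sequence $(x_k)$, so the same case analysis, ending in a contradiction with Assumption~\ref{ass:SA}(ii), delivers the required $N$. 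The proof for \eqref{subN} runs in parallel: \leref{lemma 4.1} applies verbatim to $H\in\mathcal{H}$ since the corner paths $(1,a_1,\dots,a_1)$ and $(1,b_1,\dots,b_1)$ already lie in $\Omega$.

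Given $N$, the equalities are immediate: the restricted supremum is trivially $\leq\underline\pi^n(\Phi)$, while for any $\epsilon\in(0,1)$ an $\epsilon$-optimal strategy satisfies $|H|,|h|\leq N$ by the previous step and hence contributes to the restricted sup; \eqref{subN} is handled identically. The main obstacle I anticipate is executing the \leref{lemma 4.2}-type argument in the original market: in the discretized setting the diagonal extraction of $(\tilde H,\tilde h)$ from $(H^k/|h^k|,h^k/|h^k|)$ is effortless because $\Omega^{n_k}$ is finite, whereas in $\mathcal{H}$ the functions live on an uncountable domain. I would resolve this by first diagonalizing on the countable dense set $\Omega\cap\mathbb{D}^{T+1}$ and then extending the pointwise limit to $\Omega$ by the inductive construction from the proof of \thref{consist}, invoking continuity of $g$ (Assumption~\ref{ass:SA}(i)) to preserve the limiting inequality $(\tilde H\cdot S)_T+\tilde h g\geq 0$ on the full $\Omega$—which then contradicts Assumption~\ref{ass:SA}(ii) since $|\tilde h|=1$.
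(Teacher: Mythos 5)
Your proposal is correct and follows essentially the same route as the paper's own proof. The core ingredients match: uniform boundedness of sub-hedging strategies via \leref{lemma 4.1} and the case analysis of \leref{lemma 4.2}, and for the continuum market \eqref{subN} the diagonal extraction on the countable set $\Omega\cap\mathbb{D}^{T+1}$ followed by the inductive extension from the proof of \thref{consist} to contradict Assumption~\ref{ass:SA}(ii). The one cosmetic difference is that the paper fixes a single level $\underline x:=\min_{(t,s)}\Phi(t,s)$, notes via \eqref{eq:wlbdd} that every relevant sub-hedging strategy also sub-hedges at $\underline x$, and applies \leref{lemma 4.2} once at that fixed $x$; you instead sandwich the achievable $x$ into $[-M_\Phi-1,M_\Phi]$ (using no-arbitrage for the upper bound, which the paper does not need) and observe that the \leref{lemma 4.2} argument is insensitive to $x$ ranging over a bounded set. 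Both framings are equally valid and lead to the same $N$.
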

\begin{proof}
Let $\underline x:=\min_{(t,s)\in\{1,\dotso,T\}\times\Omega}\Phi(t,s)$. It is easy to see that
\begin{equation}\label{eq:wlbdd}
\underline\pi^n(\Phi)=\sup\left\{x\geq\underline x:\ \exists (H,\tau,h)\in\mathcal{H}^n\times\mathcal{T}^n\times\mathbb{R}^e,\ \text{ s.t. } \Phi_\tau+(H\cdot S)_T+hg\geq x,\ \forall s\in\Omega^n\right\}.
\end{equation}
For $n$ large enough, the set 
$$\{(H^n,h^n)\in\mathcal{H}^n\times\mathbb{R}^e:\exists\tau\in\mathcal{T}^n,\text{ s.t. }\Phi_\tau+(H\cdot S)_T+hg\geq \underline x,\ \forall s\in\Omega^n\}$$
is uniformly bounded in $n$, which is indicated by \leref{lemma 4.2}. Since this set of strategies is the largest among the ones we need to consider for sub-hedging, thanks to \eqref{eq:wlbdd},  there exists a constant $N>0$, such that for $n$ large enough,
$$\underline\pi^n(\Phi)=\sup\left\{x\geq\underline x:\ \exists (H,\tau,h)\in\mathcal{H}\times\mathcal{T}\times\mathbb{R}^e,\ |H^n|,|h^n|\leq N\ \text{ s.t. } \Phi_\tau+(H\cdot S)_T+hg\geq x,\ \forall s\in\Omega^n\right\},$$
which implies \eqref{subNn}.

Similarly, we have that the set 
$$\{(H,h)\in\mathcal{H}\times\mathbb{R}^e:\exists\tau\in\mathcal{T},\text{ s.t. }\Phi_\tau+(H\cdot S)_T+hg\geq \underline x,\ \forall s\in\Omega\}$$
is bounded. Otherwise, there exists $(H^m,\tau^m,h^m)\in\mathcal{H}\times\mathcal{T}\times\mathbb{R}^e$, such that
$$\Phi_{\tau^m}+(H^m\cdot S)_T+h^mg\geq \underline x,\ \forall s\in\Omega\cap\mathbb{D}^{T+1},$$
with $|H^m|+|h^m|\rightarrow\infty$. Then we can use a similar argument to the one in the proof of \thref{consist} to get a contradiction. Now \eqref{subN} follows.
\end{proof}

\begin{theorem}\label{main}
Under Assumption~\ref{ass:SA}, we have
\begin{equation}\label{a6}
\lim_{n\rightarrow\infty}\underline\pi^n(\Phi)=\underline\pi(\Phi).
\end{equation}
Furthermore, if $\Phi$ and $g$ are Lipschitz continuous, then
\begin{equation}\label{a7}
|\underline\pi^n(\Phi)-\underline\pi(\Phi)|=O(1/2^n)
\end{equation}
by taking $|c^n|=O(1/2^n)$.
\end{theorem}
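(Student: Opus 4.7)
The plan is to establish the two inequalities $\underline\pi^n(\Phi)\ge\underline\pi(\Phi)-O(1/2^n)$ and $\underline\pi(\Phi)\ge\underline\pi^n(\Phi)-O(1/2^n)$ separately by exploiting the bounded-strategy reformulations \eqref{subN}--\eqref{subNn}, together with the coordinatewise nearest-neighbor projection $\pi_n\colon\Omega\to\Omega^n$ defined by $\pi_n(s)_i:=\hat s_i^n$, where $\hat s_i^n$ is the closest point to $s_i$ in $[a_i,b_i]\cap\{k/2^n:k\in\mathbb{N}\}$. The crucial feature is that since $\pi_n$ acts coordinate by coordinate, pre-composition by $\pi_n$ sends stopping times in $\mathcal{T}^n$ to stopping times in $\mathcal{T}$ and adapted processes in $\mathcal{H}^n$ to adapted processes in $\mathcal{H}$, and one has the uniform estimate $|s_i-\hat s_i^n|\le 2^{-(n+1)}$ on each coordinate.

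For the first inequality I would fix $\eps>0$, pick an $\eps$-near-optimizer $(H,\tau,h)\in\mathcal{H}\times\mathcal{T}\times\mathbb{R}^e$ for \eqref{subN} with $|H|,|h|\le N$ satisfying $\Phi_\tau+(H\cdot S)_T+hg\ge\underline\pi(\Phi)-\eps$ on all of $\Omega$, and restrict every object to the subset $\Omega^n\subset\Omega$. The restricted triple belongs to $\mathcal{H}^n\times\mathcal{T}^n\times\mathbb{R}^e$ and the inequality on $\Omega^n$ becomes $\Phi_\tau+(H\cdot S)_T+h(g-c^n)\ge\underline\pi(\Phi)-\eps-N|c^n|$. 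Letting $\eps\to 0$ and using $|c^n|=O(1/2^n)$ yields the desired bound.

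The reverse direction is the main technical content. I would pick an $\eps$-near-optimizer $(H^n,\tau^n,h^n)\in\mathcal{H}^n\times\mathcal{T}^n\times\mathbb{R}^e$ with $|H^n|,|h^n|\le N$ for \eqref{subNn} and pull it back via $\pi_n$: set $\tau(s):=\tau^n(\pi_n(s))$, $H_i(s_1,\dots,s_i):=H_i^n(\hat s_1^n,\dots,\hat s_i^n)$, and $h:=h^n$. The adaptedness remark above guarantees $(H,\tau,h)\in\mathcal{H}\times\mathcal{T}\times\mathbb{R}^e$ with norms still bounded by $N$. Comparing the two sides of the inequality evaluated at $s$ and at $\pi_n(s)$, the stochastic integral difference can be rewritten as
\[
\sum_{i=0}^{T-1}H_i^n(\pi_n(s))\bigl[(s_{i+1}-\hat s^n_{i+1})-(s_i-\hat s^n_i)\bigr],
\]
each summand of which is bounded by $2N\cdot 2^{-(n+1)}$ in absolute value; the $\Phi$ and $g$ differences between $s$ and $\pi_n(s)$ are each $O(L\cdot 2^{-n})$ by Lipschitz continuity; and $|h^nc^n|=O(N\cdot 2^{-n})$. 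Summing these contributions shows the pulled-back triple is admissible in $\underline\pi(\Phi)$ at level $\underline\pi^n(\Phi)-\eps-O(1/2^n)$, which upon sending $\eps\to 0$ yields \eqref{a7}.

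For the general convergence \eqref{a6} without Lipschitz regularity, the identical pullback construction works, except that the $\Phi$ and $g$ error terms are controlled by the moduli of continuity $\omega_\Phi(2^{-n})$ and $\omega_g(2^{-n})$ on the compact set $\Omega$, which tend to zero. The main obstacle I anticipate is the clean verification that $\tau:=\tau^n\circ\pi_n$ belongs to $\mathcal{T}$ and that the pulled-back $H$ belongs to $\mathcal{H}$, but both follow directly from $\pi_n$ being adapted by its coordinatewise definition; after that the argument reduces to bookkeeping of three small error terms.
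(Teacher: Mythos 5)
Your proposal is correct and follows essentially the same approach as the paper: restrict an $\eps$-near-optimizer in bounded form from $\Omega$ to $\Omega^n$ for one inequality, and for the other pull back a discrete $\eps$-near-optimizer via a coordinatewise projection $\Omega\to\Omega^n$ (the paper uses the floor map $\phi^n(1,s_1,\dots,s_T)=(1,\lfloor 2^ns_1\rfloor/2^n,\dots)$ rather than your nearest-neighbor map $\pi_n$, which is an immaterial difference) and control the errors in $\Phi$, $g$, $(H\cdot S)$, and the price shift $c^n$ by Lipschitz constants or moduli of continuity.
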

\begin{proof}
For $x\in(\underline\pi(\Phi)-\eps,\underline\pi(\Phi)]$, there exists $(H,\tau,h)\in\mathcal{H}\times\mathcal{T}\times\mathbb{R}^e$,with $|H|,|h|\leq N$, such that
$$\Phi_\tau+(H\cdot S)_T+hg\geq x,\ \forall s\in\Omega.$$
Hence,
$$\Phi_\tau+(H\cdot S)_T+h(g-c^n)\geq x-eN|c^n|,\ \forall s\in\Omega^n.$$
Therefore, 
$$\underline\pi(\Phi)-\eps-eN|c^n|\leq x-eN|c^n|\leq \underline\pi^n(\Phi).$$
By letting $\eps\rightarrow 0$, we have
\begin{equation}\label{a4}
\underline\pi^n(\Phi)\geq\underline\pi(\Phi)-eN|c^n|.
\end{equation}
On the other hand, for $x^n\in(\underline\pi^n(\Phi)-\eps,\underline\pi^n(\Phi)]$, there exists $(H^n,\tau^n,h^n)\in\mathcal{H}^n\times\mathcal{T}^n\times\mathbb{R}^e$,with $|H^n|,|h^n|\leq N$, such that
\begin{equation}
\label{a5}
\Phi_{\tau^n}+(H^n\cdot S)_T+h^n(g-c^n)\geq x^n,\ \forall s\in\Omega^n.
\end{equation}
Consider the map $\phi^n: \Omega\rightarrow\Omega^n$ given by
$$\phi^n(1,s_1,\dotso,s_T)=(1,\lfloor 2^n s_1\rfloor/2^n,\dotso,\lfloor 2^n s_T\rfloor/2^n),\ \ \ \forall(1,s_1,\dotso,s_T)\in\Omega.$$ 
Also define $(H,\tau)\in\mathcal{H}\times\mathcal{T}$ as
\begin{equation}\label{eps}
H(s)=H^n(\phi^n(s))\  \text{ and }\tau(s)=\tau^n(\phi^n(s))
\end{equation}
 Since $\Phi$ and $g$ are continuous on a compact set, they are uniformly continuous. Also $(H^n, q^n)_n$ are uniformly bounded, and $c^n\rightarrow 0$.  Then from \eqref{a5} we have that for $n$ large enough, the trading strategy $(H,\tau)$ defined in \eqref{eps} satisfies
\begin{equation}\label{a8}
\Phi_{\tau}+(H\cdot S)_T+h^{n}g\geq x^{n}-\eps,\ \forall s\in\Omega,
\end{equation}
by noting that $\phi^n(s)\rightarrow s$ uniformly and $\tau(s)=\tau(\phi^n(s))$. Thus, $\underline\pi(\Phi)>\underline\pi^n(\Phi)-2\eps$. Combining with \eqref{a4}, we have \eqref{a6}.

If $\Phi$ and $g$ are Lipschitz continuous, then we have a stronger version of \eqref{a8}:
\begin{equation}\notag
\Phi_{\tau}+(H\cdot S)_T+h^{n}g\geq x^{n}-eN|c^n|-C/2^n,\ \forall s\in \Omega,
\end{equation}
where $C>0$ is a constant only depends on $N, e,T$ and the Lipschitz constants of $\Phi$ and $g$. Hence,
$$\underline\pi^n(\Phi)-\eps-eN|c^n|-C/2^n\leq x^{n}-eN|c^n|-C/2^n\leq\underline\pi(\Phi).$$
Letting $\eps\rightarrow 0$ and taking $|c^n|=O(1/2^n)$, and combining with \eqref{a4}, we obtain \eqref{a7}.
\end{proof}

Similar to the proof of the sub-hedging case, we can show the following convergence result for super-hedging.
\begin{theorem}\label{main1}
Under Assumption~\ref{ass:SA}, we have
\begin{equation}\label{a6}
\lim_{n\rightarrow\infty}\overline\pi^n(\Phi)=\overline\pi(\Phi).
\end{equation}
Furthermore, if $\Phi$ and $g$ are Lipschitz continuous, then
\begin{equation}\label{a7}
|\overline\pi^n(\Phi)-\overline\pi(\Phi)|=O(1/2^n)
\end{equation}
by taking $|c^n|=O(1/2^n)$.
\end{theorem}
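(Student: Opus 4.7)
The plan is to mimic the proof of \thref{main}, with the main new ingredient being the careful handling of the non-anticipativity constraint when transferring strategies between $\mathcal{H}'$ and ${\mathcal{H}^n}'$. First I would establish the super-hedging analogs of \eqref{subNn}--\eqref{subN}: for $n$ large, both $\overline\pi^n(\Phi)$ and $\overline\pi(\Phi)$ are realized by strategies with $|H(\tau)|,|h|\leq N$ for a single constant $N$ independent of $n$ and $\tau$. Boundedness of $h$ follows by adapting the dichotomy in \leref{lemma 4.2} to the super-hedging inequality $x+(H(\tau)\cdot S)_T+h(g-c^n)\geq\Phi_\tau$ for a fixed $\tau$, and the uniform bound on $H(\tau)$ then follows by applying \leref{lemma 4.1} pointwise in $\tau$, using that $\Phi$ is bounded on the compact $\Omega$.

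For the upper bound $\overline\pi^n(\Phi)\leq\overline\pi(\Phi)+eN|c^n|+\eps$, I would take an $\eps$-optimal $(H,h)\in\mathcal{H}'\times\mathbb{R}^e$ for the original market with $|H|,|h|\leq N$. For each $\tau^n\in\mathcal{T}^n$, extend it to $\tilde\tau\in\mathcal{T}$ via $\tilde\tau(s):=\tau^n(\phi^n(s))$, and set $\tilde{H}(\tau^n):=H(\tilde\tau)|_{\Omega^n}$. Non-anticipativity of $\tilde{H}$ in ${\mathcal{H}^n}'$ is inherited from that of $H$, because for $s^n\in\Omega^n$ one has $\phi^n(s^n)=s^n$, so the event that two discretized stopping times agree on a path up to time $t<\tau^{n,1}\wedge\tau^{n,2}$ lifts to the analogous event for the extensions at that same point. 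Restricting the super-hedging inequality for $\tilde\tau$ to $\Omega^n$ and absorbing the price discrepancy $eN|c^n|$ yields the desired bound.

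For the lower bound, I would take an $\eps$-optimal $(H^n,h^n)\in{\mathcal{H}^n}'\times\mathbb{R}^e$ with $|H^n|,|h^n|\leq N$ and lift it by $H(\tau)(s):=H^n(\tau|_{\Omega^n})(\phi^n(s))$, keeping $h^n$ unchanged; here $\tau|_{\Omega^n}\in\mathcal{T}^n$ because $\Omega^n\subset\Omega$ and the raw-filtration condition passes to the restriction. The three discrepancies $|\Phi_{\tau(s)}(s)-\Phi_{(\tau|_{\Omega^n})(\phi^n(s))}(\phi^n(s))|$, $|g(s)-g(\phi^n(s))|$ and $|(H(\tau)\cdot S)_T(s)-(H^n(\tau|_{\Omega^n})\cdot S)_T(\phi^n(s))|$ are controlled by the uniform continuity (respectively Lipschitz constants) of $\Phi$ and $g$ and by the bound $|H^n|\leq N$; in the Lipschitz case each is $O(1/2^n)$. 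Combined with $|c^n|=O(1/2^n)$, this yields \eqref{a7}, and combined with uniform continuity alone it yields \eqref{a6}.

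The main obstacle I expect is verifying that the lifted $H$ in the lower-bound step genuinely belongs to $\mathcal{H}'$: since $\tau(s)$ and $(\tau|_{\Omega^n})(\phi^n(s))=\tau(\phi^n(s))$ may differ, the non-anticipativity of $H^n$ at $\phi^n(s)$ supplies agreement under the condition $t<\tau^1(\phi^n(s))\wedge\tau^2(\phi^n(s))$, not the required $t<\tau^1(s)\wedge\tau^2(s)$. I would resolve this either by coupling the discrepancy with a uniform-continuity slack (so that any mismatch on $\Omega\setminus\Omega^n$ is absorbed into the $O(1/2^n)$ error already accumulated), or by passing through the optional-decomposition construction used inside the proof of \thref{supheg} to rebuild a bona fide element of $\mathcal{H}'$ from the lifted strategy. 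Once this is handled, the remaining estimates mirror those in \thref{main} step by step.
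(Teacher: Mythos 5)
Your overall plan (reuse the boundedness lemmas, restrict for the upper bound, lift via $\phi^n$ for the lower bound) is the right one, and your upper-bound step is fine: the restriction $\tilde H(\tau^n):=H(\tilde\tau)|_{\Omega^n}$ with $\tilde\tau:=\tau^n\circ\phi^n$ does land in ${\mathcal{H}^n}'$ because $\phi^n$ is the identity on $\Omega^n$. You have also correctly located the genuine difficulty: the naive lift $H(\tau):=H^n(\tau|_{\Omega^n})\circ\phi^n$ need not be in $\mathcal{H}'$, since for $s\notin\Omega^n$ one can have $t<\tau^1(s)\wedge\tau^2(s)$ while $\tau^1(\phi^n(s))\leq t$, so the non-anticipativity of $H^n$ at $\phi^n(s)$ is never triggered (take, e.g., $\tau^1=1$ on $\{s_1\in 2^{-n}\mathbb{N}\}$ and $\tau^1=2$ otherwise, $\tau^2\equiv 2$, and $s_1\notin 2^{-n}\mathbb{N}$). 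However, neither of your proposed remedies closes this gap. The first cannot work in principle: membership in $\mathcal{H}'$ is a feasibility constraint in the primal problem defining $\overline\pi(\Phi)$, not a quantity that may be violated by $O(2^{-n})$, so an ``almost non-anticipative'' $H$ simply does not compete in the infimum. The second is in the right spirit but not carried out; even if $H^n(\tau^n)_t=H'^n_t 1_{\{t<\tau^n\}}+H''^n_t 1_{\{t\geq \tau^n\}}$, your lift still places the switch at $\tau(\phi^n(s))$ rather than at $\tau(s)$, and it is precisely that discrepancy which breaks non-anticipativity.

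The correct lift evaluates $H^n$ at \emph{constant} stopping times. Let $\sigma^k\in\mathcal{T}^n$ denote the constant stopping time $\equiv k$ and set
\begin{equation*}
H(\tau)_t(s):=H^n_t\bigl(\sigma^{\hat\tau_t(s)}\bigr)(\phi^n(s)),\qquad \hat\tau_t(s):=\tau(s)\,1_{\{\tau(s)\leq t\}}+T\,1_{\{\tau(s)>t\}}.
\end{equation*}
Since $\{\tau(s)\leq t\}$ and, on that event, $\tau(s)$ itself are determined by $(s_1,\dotso,s_t)$, and $\phi^n$ acts coordinatewise, $H(\tau)_t$ depends only on $(s_1,\dotso,s_t)$, hence $H(\tau)\in\mathcal{H}$. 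If $t<\tau^1(s)\wedge\tau^2(s)$ then $\hat\tau^1_t(s)=\hat\tau^2_t(s)=T$, so $H(\tau^1)_t(s)=H(\tau^2)_t(s)$ and $H\in\mathcal{H}'$. Moreover the non-anticipativity of $H^n$ gives $H^n_t(\sigma^{\hat\tau_t(s)})(\phi^n(s))=H^n_t(\sigma^{\tau(s)})(\phi^n(s))$ for every $t$ (the two constants coincide when $\tau(s)\leq t$ and both exceed $t$ otherwise), so $(H(\tau)\cdot S)_T(s)$ differs from $(H^n(\sigma^{\tau(s)})\cdot S)_T(\phi^n(s))$ by at most $2TN/2^n$. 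Applying the discrete super-hedge with the constant stopping time $\sigma^{\tau(s)}\in\mathcal{T}^n$ at the path $\phi^n(s)\in\Omega^n$, and using the (Lipschitz, resp.\ uniform) continuity of $\Phi$ and $g$ exactly as in the proof of \thref{main}, yields $\overline\pi(\Phi)\leq\overline\pi^n(\Phi)+O(2^{-n})$, and the remainder of your argument then goes through unchanged.
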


\subsection{A suitable construction for $c^n$ and $\mathcal{Q}^n$} \label{sec:suitable} In Section \ref{con} we obtained that as long as $c^n\rightarrow 0$, then for $n$ large enough, NA$(\mathcal{P}^n)$ holds, which implies $\mathcal{Q}^n\neq\emptyset$ (see \cite[Theorem 1.3]{Schachermayer2} or \cite[FTAP]{Nutz2}). The theorem below gives a more specific way to construct $c^n$, such that $\mathcal{Q}^n\neq\emptyset$ for all $n$ with $\Omega^n\subset\Omega$, when all the hedging options are vanilla. [This analysis would be useful for the consistency, when there are infinitely many options and the marginal distribution of the stock price (at the maturities of the hedging European options) under the martingale measures appearing in the duality are fixed.]

\begin{proposition}
Let $\mu_0,\dotso,\mu_T$ be the marginal of a martingale measure on $\mathbb{R}_+^{T+1}$. Then there exist a collection of probability measures $\{\mu_i^n: i=0,\dotso,T,\ n\in\N\}$ on $\mathbb{R}$ such that
\begin{itemize}
\item [(1)] $\mu_i^n\stackrel{w}{\rightarrow}\mu_i,\ i=0,\dotso,T$,
\item [(2)] $\mu_i^n(K^n)=1,\ i=0,\dotso,T$,
\item [(3)] For each $n\in\N$, $\mathcal{M}^n\neq\emptyset$,
\end{itemize}
where $K^n=\{0,1/2^n,2/2^n,\dotso\}$ and $\mathcal{M}^n$ is the set of martingale measures on $(K^n)^{T+1}$ with marginals $(\mu_i^n)_{i=0}^T$.
\end{proposition}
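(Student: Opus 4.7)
The plan is to invoke Strassen's theorem (iterated, via Kellerer) to reduce the problem to constructing discrete marginals $\mu_i^n$ on $K^n$ that (i) converge weakly to $\mu_i$ and (ii) are in convex order, i.e., $\mu_0^n\preceq_c\mu_1^n\preceq_c\cdots\preceq_c\mu_T^n$. Once (ii) holds, Strassen--Kellerer produces a martingale measure on $\R^{T+1}$ with prescribed marginals $(\mu_i^n)$; since each $\mu_i^n$ is supported on $K^n$, this martingale automatically lives on $(K^n)^{T+1}$ and is therefore an element of $\cM^n$. The input chain $\mu_0\preceq_c\mu_1\preceq_c\cdots\preceq_c\mu_T$ on the original side is automatic, being a straightforward consequence of Jensen's inequality applied to the given martingale coupling.

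The discretization I would use is the \emph{mean-preserving binary splitting}. For a probability measure $\mu$ on $\R_+$ with finite mean, set $I_k^n:=[k/2^n,(k+1)/2^n)$, $p_k^\mu:=\mu(I_k^n)$, and, when $p_k^\mu>0$, $m_k^\mu:=(p_k^\mu)^{-1}\int_{I_k^n}x\,d\mu$. Define $T^n\mu$ to be the measure on $K^n$ that, for each $k$, redistributes the mass $p_k^\mu$ to the two endpoints of $I_k^n$ with weights $k+1-2^nm_k^\mu$ (at $k/2^n$) and $2^nm_k^\mu-k$ (at $(k+1)/2^n$). Then $T^n\mu$ has the same mean as $\mu$, is supported on $K^n$, and displaces every unit of mass by at most $1/2^n$; in particular, $\mu_i^n:=T^n\mu_i$ converges weakly to $\mu_i$ as $n\to\infty$, and all $\mu_i^n$ share the common mean of $\mu_0$.

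The key technical step is convex-order preservation of $T^n$: if $\mu\preceq_c\nu$ then $T^n\mu\preceq_c T^n\nu$. Given $\varphi:K^n\to\R$ with non-negative discrete second differences, let $L_\varphi:\R_+\to\R$ denote the continuous piecewise linear extension of $\varphi$ that is affine on each $I_k^n$; discrete convexity of $\varphi$ is equivalent to honest convexity of $L_\varphi$. Because the splitting weights were chosen precisely to reproduce the bin mean $m_k^\mu$, a short bin-by-bin calculation yields the identity
\begin{equation*}
\int_{K^n}\varphi\,dT^n\mu=\int_{\R_+}L_\varphi\,d\mu,
\end{equation*}
and the analogous identity for $\nu$. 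Combined with $\int L_\varphi\,d\mu\leq\int L_\varphi\,d\nu$ (by $\mu\preceq_c\nu$ tested on the convex function $L_\varphi$), this gives $T^n\mu\preceq_c T^n\nu$. Applied inductively along the chain $\mu_0\preceq_c\cdots\preceq_c\mu_T$, this yields the convex-order chain needed to invoke Strassen--Kellerer, producing the required element of $\cM^n$.

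The principal obstacle is the convex-order preservation statement for $T^n$: once the interpolation identity above is in hand, weak convergence, support on $K^n$, mean preservation, and the existence of the martingale measure are all routine. A minor bookkeeping point is that in the paper's setup $S_0=1$, so $\mu_0=\delta_1$ with $1\in K^n$ for every $n$, which guarantees $\mu_0^n=\mu_0$ and starts the convex-order chain from the correct initial distribution.
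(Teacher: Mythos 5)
Your proof is correct and is essentially the same as the paper's: your ``mean-preserving binary splitting'' operator $T^n$ is precisely the paper's construction of $\mu_i^n$ (both place at grid point $k/2^n$ exactly the mass $\int_{(k-1)/2^n}^{k/2^n}(2^nx-k+1)\,d\mu + \int_{k/2^n}^{(k+1)/2^n}(k+1-2^nx)\,d\mu$, you just describe it bin-wise rather than point-wise), your interpolation identity $\int\varphi\,dT^n\mu = \int L_\varphi\,d\mu$ is the paper's identity (C.1), and the appeal to Strassen to get convex-order preservation and hence $\cM^n\neq\emptyset$ is the same argument as in the paper.
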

\begin{proof}
Fix $i\in\{0,\cdots,T\}$. For any $n\in\N$, define a measure $\mu^n_i$ on $\{0,1/2^n,2/2^n,\cdots\}$ by
\begin{align*}
\mu^n_i(\{0\})&:=\int_0^{1/2^n}(1-2^nx)d\mu_i(x),\\
\mu^n_i(\{k/2^n\})&:=\int_{(k-1)/2^n}^{k/2^n}(2^nx+1-k)d\mu(x)+\int_{k/2^n}^{(k+1)/2^n}(1+k-2^nx)d\mu(x),\ \ \forall k\in\N.
\end{align*}
By construction, we have $\sum_{k\in\N\cup\{0\}}\mu^n_i(\{k/2^n\}) =\int_{\R_+}d\mu_i(x)=1$. It follows that $\mu^n_i$ is a probability measure on $\{0,1/2^n,2/2^n,\cdots\}$. 

For any function $h:\R\mapsto\R$, consider the piecewise linear function $h^n$ defined by setting $h^n(k/2^n):=h(k/2^n)$ for $k\in\N\cup\{0\}$. We define $h^n(x)$ for $x\in\R_+\setminus\{0,1/2^n,2/2^n,\cdots\}$ using linear interpolation. That is,  for any $x\in \mathbb{R}_+$,
\begin{align*}
h^n(x)&:=(1+\lfloor 2^nx\rfloor-2^nx)h\left(\frac{\lfloor 2^nx\rfloor}{2^n}\right)+(2^nx-\lfloor 2^nx\rfloor)h\left(\frac{1+\lfloor 2^nx\rfloor}{2^n}\right)\\
&=h\left(\frac{k}{2^n}\right)(1+k-2^nx)+h\left(\frac{k+1}{2^n}\right)(2^nx-k),\ \ \forall k\in\N\cup\{0\}.
\end{align*}
From the above identity and the definition of $\mu^n_i$, we observe that 
\begin{equation}\label{h=h^n}
\int_{\R_+} h d\mu^n_i=\int_{\R_+}h^n d\mu_i.
\end{equation}
Now, if we take $h$ to be an arbitrary bounded continuous function, then $h^n\to h$ pointwise and the integrals in \eqref{h=h^n} are finite. By using \eqref{h=h^n} and the dominated convergence theorem, we have $\int_{\R_+} h d\mu^n_i\to\int_{\R_+} h d\mu_i$. This shows that $\mu_i^n\stackrel{w}{\rightarrow}\mu_i$. On the other hand, if we take $h$ to be an arbitrary convex function, then $h^n$ by definition is also convex. Thanks to \cite[Theorem 8]{Strassen65}, the convexity of $h^n$ imply that $\int_{\R_+}h^n d\mu_i$ is nondecreasing in $i$. We then obtain form \eqref{h=h^n} that $\int_{\R_+} h d\mu^n_i$ is nondecreasing in $i$. Since this holds for any given convex function $h$, we conclude from \cite[Theorem 8]{Strassen65} that $\mathcal{M}^n\neq\emptyset$.
\end{proof}

Now we further assume that the finitely many options are vanilla. Take $Q\in\mathcal{Q}$ and let $\mu_i$ be the distribution of $S_i$ under $Q$ for $i=1,\dotso,T$. From the theorem above (and the construction of $\mu_i^n$), there exists a martingale measures $Q^n$ supported on $\Omega^n$, with marginals $\mu_i^n\stackrel{w}{\rightarrow}\mu_i$, for $i=1,\dotso,T$. Set 
$$c_i^n:=\mathbb{E}_{Q^n}[g_i]-\mathbb{E}_Q[g_i],\ \ i=1,\dotso,e.$$
Then, we have $c^n\rightarrow 0$ by the weak convergence of the marginals, and $\mathcal{Q}^n\neq\emptyset$  for all $n$ with $\Omega^n\subset\Omega$,  since $Q^n\in\mathcal{Q}^n$. In addition, if $g$ is Lipschitz continuous, we have that $|c^n|=O(1/2^n)$.

\begin{appendices}
\begin{center}
\vspace{1cm}

APPENDIX
\end{center}
\section{Proof of \prref{prop 1.1}}\label{sec:appA}
\begin{proof}[Proof of \prref{prop 1.1}]
Following the proof of \cite[Lemma 5.3]{ZZ}, it can be shown that for $t\in\{0,\dotso,T-1\}$ and $\omega\in\Omega_{T-t}$,
$$\mathcal{M}_t(\omega)=\{Q\in\kP(\Omega_{T-t}):\ Q\ll P \text{ for some }P\in\P_t(\omega),\ (S_k(\omega,\cdot))_{k=t,\dotso,T}\text{ is a }Q\text{-martingale}\}.$$
Hence, in order to show the analyticity of graph($\cM_t$), it suffices to show that the sets
$$\cI:=\{(\omega,\Q)\in\Omega_t\times\kP(\Omega_{T-t}):\ Q\ll P \text{ for some }P\in\P_t(\omega)\}$$
and
$$\cJ:=\{(\omega,\Q)\in\Omega_t\times\kP(\Omega_{T-t}):\ (S_k(\omega,\cdot))_{k=t,\dotso,T}\text{ is a }Q\text{-martingale}\}$$
are analytic.

Thanks to the analyticity of graph($\P_t$), we can follow the argument in the proof of \cite[Lemma 4.8]{Nutz2} to show that $\cI$ is analytic. Now let us consider $\cJ$. For $k=t,\dotso,T-1$, there exists a countable algebra $(A_i^k)_{i=1}^\infty$ generating $\mathcal{F}_k$. Then
$$\cI=\bigcap_{k=t}^{T-1}\bigcap_{i=1}^\infty\{(\omega,\Q)\in\Omega_t\times\kP(\Omega_{T-t}):\ E_Q[\Delta S_k(\omega,\cdot)1_{A_i^k}(\omega,\cdot)]=0\}.$$
By a monotone class argument, we can show that for $(\omega,\Q)\in\Omega_t\times\kP(\Omega_{T-t})$, the map
$$(\omega,Q)\mapsto E_Q[\Delta S_k(\omega,\cdot)1_{A_i^k}(\omega,\cdot)]$$
is Borel measurable (e.g., see the first paragraph in the proof of \cite[Theorem 2.3]{Nutz3}). Therefore, the set $\cJ$ is Borel measurable, and in particular it is analytic.
\end{proof}

\section{Optimal Stopping for Adverse Nonlinear Expectations}\label{sec:appB}
In this section, we analyze both the adverse optimal stopping problems for nonlinear expectations. This result is used in Theorem~\ref{sub} for showing the existence of the sub hedging strategy. Note that  \cite{2013arXiv1301.0091B,2012arXiv1209.6601E,2012arXiv1212.2140N} analyze similar problems in continuous time. Instead of referring to these papers directly, we decided to include a short analysis here because it is much simpler to carry it out  in discrete time using backward induction.

For each $t\in\{0,\dotso,T-1\}$ and $\omega\in\Omega_t$, we are given a nonempty convex set $\mathcal{R}_t(\omega)\subset\mathfrak{P}(\Omega_1)$ of probability measures. We assume that for each $t$, the graph of $\mathcal{R}_t$ is analytic, and thus admits a universally measurably selector. For $t=0,\dotso,T-1$ and $\omega\in\Omega_t$, define 
$$\mathfrak{R}_t(\omega):=\{P_t\otimes\dotso\otimes P_{T-1}:\ P_i(\omega,\cdot)\in\mathcal{R}_i(\omega,\cdot),\ i=t,\dotso,T-1\},$$
where each $P_i$ is a universally measurable selector of $\mathcal{R}_i$. We write $\mathcal{R}$ for $\mathfrak{R}_0$ for short. We assume the graph of $\mathfrak{R}_t$ is analytic for $t=0,\dotso,T-1$. Let $\xi$ be a u.s.a. function. For $\omega\in\Omega$, define the nonlinear conditional expectation as
$$\mathcal{E}_t[\xi](\omega)=\sup_{P\in\mathfrak{R}_t(\omega^t)}E_P[\xi(\omega^t,\cdot)].$$
We also write $\mathcal{E}$ for $\mathcal{E}_0$ for short. By \cite[Theorem 2.3]{Nutz3}, we know that the function $\mathcal{E}_t[\xi]$ is u.s.a. and $\mathcal{F}_t$-measurable, and the nonlinear conditional expectation satisfies the tower property, i.e., for $0\leq s<t\leq T$, it holds that
\begin{equation}\label{tower}
\mathcal{E}_s\mathcal{E}_t[\xi]=\mathcal{E}_s[\xi].
\end{equation}
Moreover, by Galmarino's test (see \cite[Lemma 2.5]{Nutz3}), it follows that if a function is $\mathcal{F}_t$-measurable, it only depends on the path up to time $t$. Throughout this section, we will assume that $f$ is an adapted process with respect to the raw filtration $(\mathcal{B}(\Omega_t))_{t=0}^T$.

We consider the optimal stopping problem
\begin{equation}\label{ap1}
X:=\inf_{\tau\in\mathcal{T}}\mathcal{E}[f_\tau].
\end{equation}
and define the upper value process
\begin{equation}\label{ap2}
X_t:=\inf_{\tau\in\mathcal{T}_t}\mathcal{E}_t[f_\tau],
\end{equation}
and the lower value process
\begin{equation}\label{bb1}
Y_t(\omega):=\sup_{P\in\mathfrak{R}_t(\omega^t)}\inf_{\tau\in\mathcal{T}_t}E_P[f_\tau(\omega^t,\cdot)].
\end{equation}
In particular $X=X_0$. We have the following result:
\begin{theorem}\label{infsup}
Assume for $t=1,\dotso,T-1$, $\mathcal{E}_t[X_{t+1}]$ (or $\mathcal{E}_t[Y_{t+1}]$) is $\mathcal{B}(\Omega_t)$-measurable. Then $X_t=Y_t,\ t=0,\dotso,T$. In particular, the game defined in \eqref{ap1} has a value, i.e.,
\begin{equation}\label{ap3}
\inf_{\tau\in\mathcal{T}}\mathcal{E}[f_\tau]=\sup_{P\in\mathcal{R}}\inf_{\tau\in\mathcal{T}}{E}[f_\tau].
\end{equation}
Moreover, there exists an optimal stoping time described by
\begin{equation}\label{ap4}
\tau^*=\inf\{t\geq 0:\ f_t=X_t\}.
\end{equation}
\end{theorem}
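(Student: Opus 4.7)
The minimax inequality gives $X_t \geq Y_t$ for every $t$; the plan is to prove the reverse by backward induction on $t$, simultaneously establishing the dynamic programming principle (DPP) $X_t = f_t \wedge \mathcal{E}_t[X_{t+1}]$ for $t=0,\dotsc,T-1$, starting from $X_T = f_T = Y_T$. The optimality of $\tau^*$ in \eqref{ap4} will then be essentially a corollary of the DPP.

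The DPP itself rests on the following locality property of $\mathcal{E}_t$: for any $A \in \mathcal{F}_t$ and u.s.a.\ functions $\xi,\eta$,
\[
\mathcal{E}_t[\xi 1_A + \eta 1_{A^c}] = 1_A\,\mathcal{E}_t[\xi] + 1_{A^c}\,\mathcal{E}_t[\eta],
\]
which is immediate from the pointwise definition of $\mathcal{E}_t$ and Galmarino's test. Any $\tau \in \mathcal{T}_t$ decomposes as $\tau = t\cdot 1_A + \sigma\cdot 1_{A^c}$ with $A := \{\tau = t\} \in \mathcal{F}_t$ and $\sigma \in \mathcal{T}_{t+1}$, so $\mathcal{E}_t[f_\tau] = f_t\cdot 1_A + 1_{A^c}\mathcal{E}_t[f_\sigma]$. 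The tower property \eqref{tower} together with the inductive identity $\inf_{\sigma \in \mathcal{T}_{t+1}} \mathcal{E}_{t+1}[f_\sigma] = X_{t+1}$ gives $\inf_\sigma \mathcal{E}_t[f_\sigma] = \mathcal{E}_t[X_{t+1}]$; optimizing finally over $A$ (pointwise, via $A = \{f_t \leq \mathcal{E}_t[X_{t+1}]\} \in \mathcal{F}_t$ thanks to the measurability hypothesis) yields the DPP.

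For the reverse inequality $Y_t \geq X_t$, fix $\eps > 0$ and construct a product measure $P^\eps = P_t \otimes P'^\eps \in \mathfrak{R}_t(\omega^t)$ with $\inf_\tau E_{P^\eps}[f_\tau] \geq X_t - \eps$. The inductive identity $Y_{t+1} = X_{t+1}$, the Borel measurability of $\mathcal{E}_t[X_{t+1}]$ (or $\mathcal{E}_t[Y_{t+1}]$) and the analyticity of $\mathrm{graph}(\mathfrak{R}_{t+1})$ enable, via the Jankov--von Neumann selection of \cite[Lemma 4.8]{Nutz2}, a universally measurable kernel $\omega^{t+1} \mapsto P'^\eps(\omega^{t+1},\cdot) \in \mathfrak{R}_{t+1}(\omega^{t+1})$ with
\[
\inf_{\tau \in \mathcal{T}_{t+1}} E_{P'^\eps(\omega^{t+1},\cdot)}[f_\tau(\omega^{t+1},\cdot)] \geq X_{t+1}(\omega^{t+1}) - \eps/2.
\]
Similarly selecting $P_t \in \mathcal{R}_t(\omega^t)$ with $E_{P_t}[X_{t+1}(\omega^t,\cdot)] \geq \mathcal{E}_t[X_{t+1}](\omega^t) - \eps/2$ and invoking the classical DPP under the linear expectation $E_{P^\eps}$ yields $\inf_\tau E_{P^\eps}[f_\tau] \geq f_t \wedge (\mathcal{E}_t[X_{t+1}] - \eps) \geq X_t - \eps$, hence $Y_t \geq X_t - \eps$; letting $\eps \downarrow 0$ closes the induction.

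Optimality of $\tau^* = \inf\{t \geq 0 : f_t = X_t\}$ is then established by a parallel backward induction proving $X_t = \mathcal{E}_t[f_{\tau^*_t}]$, where $\tau^*_t := \inf\{s \geq t : f_s = X_s\}$: split on the $\mathcal{F}_t$-measurable event $\{f_t = X_t\}$, apply the tower property together with $\mathcal{E}_{t+1}[f_{\tau^*_{t+1}}] = X_{t+1}$ on its complement (where the DPP also gives $X_t = \mathcal{E}_t[X_{t+1}]$), and conclude via the locality property of $\mathcal{E}_t$. The main technical obstacle throughout is the measurable selection in the third paragraph, specifically the production of a universally measurable kernel $P'^\eps$ whose conditional Snell infimum is $\eps$-close to $Y_{t+1}$; this is exactly where the measurability hypothesis becomes essential, reducing the problem to the standard Jankov--von Neumann framework.
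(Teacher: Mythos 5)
Your proof follows essentially the same route as the paper's: establish the dynamic programming recursion by backward induction, extract an optimal $\tau^*$ from it, and then prove $Y_t\geq X_t$ by a measurable-selection argument that pastes an $\eps$-optimal kernel $P'^\eps$ onto an $\eps$-optimizing $P_t$. The substantive difference is presentational. The paper packages the DPP and the optimality of $\tau^*$ into a single induction (their identity \eqref{ap5} and the stopping time $\bar\tau=\sum_k k 1_{A_k}$), whereas you state the DPP first and then prove optimality of $\tau^*$ as a ``parallel backward induction.'' As written this creates a circularity: your claim that $\inf_{\sigma\in\mathcal{T}_{t+1}}\mathcal{E}_t[f_\sigma]=\mathcal{E}_t[X_{t+1}]$ does \emph{not} follow merely from the tower property and the definition $\inf_\sigma\mathcal{E}_{t+1}[f_\sigma]=X_{t+1}$ --- the ``$\leq$'' direction requires a single stopping time $\sigma^*\in\mathcal{T}_{t+1}$ achieving $\mathcal{E}_{t+1}[f_{\sigma^*}]=X_{t+1}$ \emph{pointwise}, i.e., the optimality of $\tau^*_{t+1}$ at the next step. (In general one only has $\inf_\sigma\mathcal{E}_t[g_\sigma]\geq\mathcal{E}_t[\inf_\sigma g_\sigma]$.) You therefore need to prove the DPP and the optimality of $\tau^*_t$ \emph{jointly} by the same backward induction, exactly as the paper does in Steps 1--2. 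Finally, in the measurable-selection step you invoke Jankov--von Neumann to produce $P'^\eps$; this requires the map $(\omega,P)\mapsto\inf_{\tau}E_P[f_\tau(\omega,\cdot)]$ to be upper-semianalytic, which is not automatic (an infimum of u.s.a.\ maps need not be u.s.a.) and deserves justification --- typically via the classical single-measure DPP and the Borel-measurability hypothesis. The paper sidesteps these technicalities by citing \cite[Lemma 4.11]{2012arXiv1212.2140N} mutatis mutandis; you are effectively reconstructing that lemma, and these are precisely the details it has to handle.
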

\begin{proof}
We shall prove the result under the Borel measurability assumption for $\mathcal{E}_t[X_{t+1}]$. In fact, it could be seen from the proof later on that the Borel measurability assumption on $\mathcal{E}_t[X_{t+1}]$ is equivalent to that on $\mathcal{E}_t[Y_{t+1}]$.\\
\textbf{Step 1:} We first show that for $s\in\{0,\dotso,T\}$,
\begin{equation}\label{ap5}
X_s=\inf_{\tau\in\mathcal{T}_s}\mathcal{E}_s(f_\tau1_{\{\tau<t\}}+X_t1_{\{\tau\geq t\}}),\quad 0\leq s<t\leq T.
\end{equation}
We shall prove it by a backward induction. For $s=T-1$, since $\tau$ equals either $T-1$ or $T$, we have from \eqref{ap2} that $X_{T-1}=f_{T-1}\wedge\mathcal{E}_{T-1}(f_T)=f_{T-1}\wedge\mathcal{E}_{T-1}(X_T)$, and thus \eqref{ap5} holds. Assume for $s+1\in\{0,\dotso,T-1\}$ the corresponding conclusion holds. Let $t\in\{s+1,\dotso,T\}$. For any $\tau\in\mathcal{T}_s$, using the tower property \eqref{tower} and the definition of $X_t$ in \eqref{ap2}, we have that
$$\mathcal{E}_s(f_\tau)=\mathcal{E}_s\left(f_\tau1_{\{\tau<t\}}+\mathcal{E}_t(f_{\tau\vee t})1_{\{\tau\geq t\}}\right)\geq\mathcal{E}_s\left(f_\tau1_{\{\tau<t\}}+X_t1_{\{\tau\geq t\}}\right),$$
which implies the inequality \lq\lq$\geq$\rq\rq\  in \eqref{ap5}. 

Let us turn to the inequality \lq\lq$\leq$\rq\rq\  in \eqref{ap5}. By the induction assumption, we have that for $k\geq s+1$,
\begin{equation}\label{ap7}
X_k=\inf_{\tau\in\mathcal{T}_k}\mathcal{E}_k(f_\tau1_{\{\tau<k+1\}}+X_{k+1}1_{\{\tau\geq k+1\}})=f_k\wedge\mathcal{E}_k(X_{k+1}).
\end{equation}
Define
\begin{eqnarray}
A_s&:=&\{f_s\leq\mathcal{E}_s(X_{s+1})\}\ \in\ \mathcal{B}(\Omega_s),\notag\\
A_k&:=&\left[\{f_k\leq\mathcal{E}_k(X_{k+1})\}\setminus(\cup_{i=s}^{k-1}A_i)\right]=\left[\{f_k=X_k\}\setminus(\cup_{i=s}^{k-1}A_i)\right]\ \in\ \mathcal{B}(\Omega_k),\quad k=s+1,\dotso,T.\notag
\end{eqnarray}
Note that $A_T=(\cup_{i=s}^{T-1}A_i)^c\in\mathcal{B}(\Omega_{T-1})$. Denoting 
\begin{equation}\label{ap6}
\bar\tau=\sum_{k=s}^Tk1_{A_k}\in\mathcal{T}_s.
\end{equation}
and using the tower property repeatedly, we obtain that
\begin{eqnarray}
X_s&\leq&\mathcal{E}_s(f_{\bar\tau})\notag\\
&=&\mathcal{E}_s\left(\sum_{k=s}^{T-2}f_k1_{A_k}+f_{T-1}1_{A_{T-1}}+\mathcal{E}_{T-1}(X_T)1_{(\cup_{i=s}^{T-1}A_i)^c}\right)\notag\\
&=&\mathcal{E}_s\left(\sum_{k=s}^{T-2}f_k1_{A_k}+X_{T-1}1_{(\cup_{i=s}^{T-2}A_i)^c}\right)\notag\\
&=&\mathcal{E}_s\left(\sum_{k=s}^{T-3}f_k1_{A_k}+f_{T-2}1_{A_{T-2}}+\mathcal{E}_{T-2}(X_{T-1})1_{(\cap_{k=s}^{T-2}A_k)^c}\right)\notag\\
&=&\dotso\notag\\
&=&\mathcal{E}_s\left(f_s1_{A_s}+X_{s+1}1_{A_s^c}\right)\notag\\
\label{s1}&=&f_s\wedge\mathcal{E}_s(X_{s+1}).
\end{eqnarray}
On the other hand, for $t\in\{s+1,\dotso,T\}$, by \eqref{ap7} and the tower property, we have that 
\begin{eqnarray}
X_s&\geq&\inf_{\tau\in\mathcal{T}_s}\mathcal{E}_s\left(f_\tau1_{\{\tau<t\}}+X_t1_{\{\tau\geq t\}}\right)\notag\\
&\geq&\inf_{\tau\in\mathcal{T}_s}\mathcal{E}_s\left(f_\tau 1_{\{\tau<t-1\}}+X_{t-1} 1_{\{\tau=t-1\}}+\mathcal{E}_{t-1}(X_t)1_{\{\tau\geq t\}}\right)\notag\\
&\geq&\inf_{\tau\in\mathcal{T}_s}\mathcal{E}_s\left(f_\tau1_{\{\tau<t-1\}}+X_{t-1}1_{\{\tau\geq t-1\}}\right)\notag\\
&\geq&\dotso\notag\\
&\geq&\inf_{\tau\in\mathcal{T}_s}\mathcal{E}_s\left(f_\tau1_{\{\tau<s+1\}}+X_{s+1}1_{\{\tau\geq s+1\}}\right)\notag\\
\label{s2}&=&f_s\wedge\mathcal{E}_s(X_{s+1}).
\end{eqnarray}
Hence, we have \eqref{ap5} holds for $s$.

\noindent\textbf{Step 2:} Define $\hat\tau=\sum_{k=0}^Tk1_{A_k}$, same as $\bar\tau$ defined in \eqref{ap6} for $s=0$. From \eqref{s1} \& \eqref{s2} in Step 1, we have that $X=\mathcal{E}(f_{\hat\tau})$. Noting $A_0=\{f_0\leq\mathcal{E}(X_1)\}=\{f_0=X\}$, we have $\hat\tau=\tau^*$.

\noindent\textbf{Step 3:} Using \eqref{ap5}, we can follow the proof of \cite[Lemma 4.11]{2012arXiv1212.2140N} mutatis mutandis, to show by a backward induction that $X_t=Y_t,\ t=0,\dotso,T$. In particular \eqref{ap3} holds.
\end{proof}

The next remark is concerned with the ``sup sup" version of the optimal stopping problem:
\begin{remark}
For the optimal stopping problem 
$$Z:=\sup_{\tau\in\mathcal{T}}\mathcal{E}[f_\tau],$$
let us define 
$$Z_t:=\sup_{\tau\in\mathcal{T}_t}\mathcal{E}_t[f_\tau],\quad t=0,\dotso,T.$$
In particular $Z=Z_0$. Following Steps 1 and 2 in the proof of \thref{infsup}, we can show that if
$\mathcal{E}_t[Z_{t+1}]$ is $\mathcal{B}(\Omega_t)$-measurable for $t=1,\dotso,T-1$, then 
\begin{equation}\notag
Z_t=f_t\vee\mathcal{E}_t(Z_{t+1}),\quad t=0,\dotso,T,
\end{equation}
and $\tau^{**}:=\inf\{t\geq 0:\ f_t=Z_t\}$ is optimal.
\end{remark}

\subsection{An example in which $\mathcal{E}_t[Y_{t+1}]$ is Borel measurable}
Let $S=(S_i)_{i=1}^T$ be the canonical process and $\mathcal{R}$ be the set of martingale measures on some compact set $\mathcal{K}\subset\Omega_T$. Assume $\mathcal{R}\neq\emptyset$. Then for $\omega\in\mathcal{K}$, $\mathfrak{R}_t(\omega^t)$ is the set of martingale measures on $\mathcal{K}$ from time $t$ to $T$ given the previous path $\omega^t$. \prref{exa} below indicates that the assumption in \thref{infsup} is satisfied provided $f$ is u.s.c. in $\omega$. 

\begin{proposition}\label{exa}
Assume that $f_t$ is u.s.c. for $t=1,\dotso,T$. Then $\mathcal{E}_t[Y_{t+1}]$ is u.s.c., and thus $\mathcal{B}(\Omega_t)$-measurable, $t=1,\dotso,T$.
\begin{proof}
Since $\mathcal{K}$ is compact, it is easy to check that the set $\{(\omega,P):\ \omega\in\mathcal{K}, P\in\mathfrak{R}_t(\omega^t)\}$ is closed. By \cite[Proposition 7.33]{Shreve}, $Y_t$ defined in \eqref{bb1} is u.s.c. Following the proof similar to that of \prref{semicti}, it could be shown that for $(\omega,P)\in\Omega_T\times\mathfrak{P}(\Omega_{T-t})$, the map $(\omega,P)\mapsto E_P[Y(\omega^t,\cdot)]$ is u.s.c. Then applying \cite[Proposition 7.33]{Shreve} again, we know that $\mathcal{E}_t[Y_{t+1}]$ is u.s.c.
\end{proof}
\end{proposition}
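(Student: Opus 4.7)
The plan is to run the compactness-plus-semicontinuity machinery of \cite[Proposition~7.33]{Shreve} twice, in the same spirit as the proof of \prref{semicti} but with upper semicontinuity replacing lower semicontinuity throughout. The starting observation is that for each $s$ the graph
$$\Gamma_s := \{(\omega,P) : \omega\in\mathcal{K},\ P\in\mathfrak{R}_s(\omega^s)\}$$
is closed with compact $\omega$-sections: closedness uses that the set of martingale measures on the fixed compact path space $\mathcal{K}$ is weakly closed and that the conditioning on $\omega^s$ is continuous in $\omega$ (the coordinate projections are continuous), while sectional compactness is Prokhorov on $\mathcal{K}$.

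In the first pass I would establish that $Y_{t+1}$ is u.s.c. For any fixed $\tau\in\mathcal{T}_{t+1}$ the map $(\omega,P)\mapsto E_P[f_\tau(\omega^{t+1},\cdot)]$ should be u.s.c.; this is the upper-semicontinuity mirror of the estimate appearing in \prref{semicti}, proved first when $f_t$ is uniformly continuous by splitting the difference along a converging pair $(\omega^n,P^n)\to(\omega,P)$ via a uniform modulus, and then extended to bounded u.s.c.\ $f_t$ by approximating from above with a decreasing sequence of uniformly continuous bounded functions (which exists since $\mathcal{K}$ is compact metric) and applying the portmanteau direction $\limsup_n E_{P^n}[h]\le E_P[h]$ for bounded u.s.c.\ $h$. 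An infimum over $\tau$ preserves upper semicontinuity, and \cite[Proposition~7.33]{Shreve} applied to the supremum over $P\in\Gamma_{t+1}$ delivers that $Y_{t+1}$ is u.s.c.

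In the second pass I would repeat the argument one level higher, with $Y_{t+1}$ in place of $f_\tau$. Since $Y_{t+1}$ is now a bounded u.s.c.\ function on $\mathcal{K}$, the same approximation-plus-portmanteau reasoning gives that $(\omega,P)\mapsto E_P[Y_{t+1}(\omega^t,\cdot)]$ is u.s.c.\ on $\Omega_t\times\mathfrak{P}(\mathcal{K})$. A second application of \cite[Proposition~7.33]{Shreve} to the supremum over $P\in\Gamma_t$ shows that $\mathcal{E}_t[Y_{t+1}]$ is u.s.c.\ in $\omega$, whence Borel measurability is automatic on the Polish space $\Omega_t$. The main technical obstacle is the joint upper semicontinuity of $(\omega,P)\mapsto E_P[h(\omega^s,\cdot)]$ for a general bounded u.s.c.\ $h$: the trick is to insert $E_{P^n}[h(\omega^s,\cdot)]$ between the two terms along a converging sequence $(\omega^n,P^n)\to(\omega,P)$, control the resulting $\omega$-perturbation by the uniform modulus of a smooth upper approximant $h^k\searrow h$, and control the weak-convergence term by the standard u.s.c.\ limsup inequality. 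Once this is in place, the Berge-type result \cite[Proposition~7.33]{Shreve} finishes each pass cleanly.
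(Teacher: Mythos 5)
Your proposal matches the paper's argument: both show the graph $\Gamma_s$ is closed with compact sections, both invoke the upper-semicontinuity analogue of \prref{semicti} to get upper semicontinuity of $(\omega,P)\mapsto E_P[\,\cdot\,]$ (approximating a bounded u.s.c.\ integrand from above by uniformly continuous functions), and both apply \cite[Proposition 7.33]{Shreve} twice --- once to conclude $Y_{t+1}$ is u.s.c.\ and once more to conclude $\mathcal{E}_t[Y_{t+1}]$ is u.s.c. You simply spell out the steps (the infimum over $\tau$ preserving upper semicontinuity, the portmanteau $\limsup$ inequality, the decreasing approximant) that the paper compresses into the phrase ``following the proof similar to that of \prref{semicti}.''
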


\section{Upper-semianalyticity and the super-martingale property}\label{sec:appC}
The result in this section is used in the proof of \thref{supheg}. Let us use the setting in Section B. Let $\phi=(\phi_t)_{t=0}^T$ be an adapted process, and $\mathfrak{g}$ be u.s.a. Define the process $U=(U_t)_{t=0}^T$ as
\begin{equation}\label{ap10}
U_t:=\sup_{\tau\in\mathcal{T}_t}\mathcal{E}_t[\phi_\tau+\mathfrak{g}].
\end{equation}
We have the following result.
\begin{proposition}\label{cc3}
Assume for $(\omega,P)\in\Omega_T\times\mathfrak{P}(\Omega_{T-t})$, the map $(\omega,P)\mapsto\sup_{\tau\in\mathcal{T}_t}E_P[\phi_\tau(\omega^t,\cdot)]$ is u.s.a., $t=1,\dotso,T$. Then $U_t$ defined in \eqref{ap10} is u.s.a. and $\mathcal{F}_t$-measurable for $t=1,\dotso,T$, and $U=(U_t)_{t=0}^T$ is a super-martingale under each $P\in\mathcal{R}$.
\end{proposition}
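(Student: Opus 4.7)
The plan is to prove the three conclusions sequentially: upper semianalyticity and $\mathcal{F}_t$-measurability of $U_t$ by exchanging the two suprema in its definition, the inequality $\mathcal{E}_t[U_{t+1}]\leq U_t$ by a dynamic programming argument with an $\epsilon$-optimal measurable selection, and the super-martingale property under each $P\in\mathcal{R}$, which then follows by disintegrating $P$.

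For the first part, since $\mathfrak{g}$ does not depend on $\tau$, I swap the two suprema to rewrite
\begin{equation*}
U_t(\omega) = \sup_{P\in\mathfrak{R}_t(\omega^t)}\left\{E_P[\mathfrak{g}(\omega^t,\cdot)] + \sup_{\tau\in\mathcal{T}_t}E_P[\phi_\tau(\omega^t,\cdot)]\right\}.
\end{equation*}
The map $(\omega,P)\mapsto E_P[\mathfrak{g}(\omega^t,\cdot)]$ is u.s.a.\ by \cite[Theorem 2.3]{Nutz3}, and the inner supremum over $\tau$ is u.s.a.\ by the standing hypothesis, so the braced integrand is u.s.a.\ in $(\omega,P)$. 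Since the graph of $\mathfrak{R}_t$ is analytic, the outer sup is u.s.a.\ in $\omega$ by a standard projection argument (e.g.\ \cite[Proposition 7.47]{Shreve}). Because the right-hand side depends only on $\omega^t$, Galmarino's test together with universal measurability gives $\mathcal{F}_t$-measurability.

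For the inequality $\mathcal{E}_t[U_{t+1}]\leq U_t$, I would proceed by backward induction, showing by the same type of argument as in the proof of \thref{infsup} that for each $\epsilon>0$ there exists $\tau^\epsilon\in\mathcal{T}_{t+1}$ with $U_{t+1}\leq\mathcal{E}_{t+1}[\phi_{\tau^\epsilon}+\mathfrak{g}]+\epsilon$ pointwise on $\Omega$. This rests on the recursion $U_s=(\phi_s+\mathcal{E}_s[\mathfrak{g}])\vee\mathcal{E}_s[U_{s+1}]$, whose ``$\leq$'' direction comes from decomposing any $\tau\in\mathcal{T}_s$ on the $\mathcal{F}_s$-measurable event $\{\tau=s\}$ and using locality of $\mathcal{E}_s$. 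Monotonicity of $\mathcal{E}_t$ and the tower property \eqref{tower} applied to $\tau^\epsilon$ give
\begin{equation*}
\mathcal{E}_t[U_{t+1}]\leq\mathcal{E}_t\bigl[\mathcal{E}_{t+1}[\phi_{\tau^\epsilon}+\mathfrak{g}]\bigr]+\epsilon=\mathcal{E}_t[\phi_{\tau^\epsilon}+\mathfrak{g}]+\epsilon\leq U_t+\epsilon,
\end{equation*}
and the claim follows upon letting $\epsilon\downarrow 0$. The super-martingale property is then immediate: for each $P\in\mathcal{R}=\mathfrak{R}_0$, fix a disintegration $P=P_0\otimes\dotso\otimes P_{T-1}$ with $P_i(\omega^i,\cdot)\in\mathcal{R}_i(\omega^i)$; the conditional distribution under $P$ of the post-$t$ coordinates given $\omega^t$ lies in $\mathfrak{R}_t(\omega^t)$, so
\begin{equation*}
E_P[U_{t+1}\mid\mathcal{F}_t](\omega)\leq\sup_{P'\in\mathfrak{R}_t(\omega^t)}E_{P'}[U_{t+1}(\omega^t,\cdot)]=\mathcal{E}_t[U_{t+1}](\omega)\leq U_t(\omega),\quad P\text{-a.s.}
\end{equation*}

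The main obstacle will be the measurable construction of the $\epsilon$-optimizer $\tau^\epsilon$: the natural ``stop'' sets $\{\phi_s+\mathcal{E}_s[\mathfrak{g}]\geq\mathcal{E}_s[U_{s+1}]-\epsilon\}$ are only analytic, so to produce a stopping time with respect to the raw filtration $(\mathcal{B}(\Omega_t))$ one must replace them by Borel representatives that agree with them $\mathcal{R}$-quasi-surely before pasting, which is the same kind of measurable selection/pasting argument worked out in Appendix B and in \cite[Lemma 4.11]{2012arXiv1212.2140N}. Some additional bookkeeping is also needed because of the convention $\infty-\infty=-\infty$ when splitting $E_P[\phi_\tau+\mathfrak{g}]$ as a sum, but this can be handled by truncating or by separating positive and negative parts.
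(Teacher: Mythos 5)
The first part of your argument — u.s.a.\ and $\mathcal{F}_t$-measurability of $U_t$ via \cite[Proposition 7.47]{Shreve} applied to the u.s.a.\ integrand — is essentially identical to the paper's. The super-martingale step, however, follows a genuinely different route from the paper's, and this is where a real gap appears. You propose to produce an $\epsilon$-optimal stopping time $\tau^\epsilon\in\mathcal{T}_{t+1}$ with $U_{t+1}\leq\mathcal{E}_{t+1}[\phi_{\tau^\epsilon}+\mathfrak{g}]+\epsilon$ and then push through the tower property. You correctly flag the obstacle: the natural stop sets are only analytic, not Borel, so $\tau^\epsilon$ is not \emph{a priori} a stopping time with respect to the raw filtration. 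But this is more than a bookkeeping inconvenience. In \thref{infsup} the paper explicitly \emph{assumes} that $\mathcal{E}_t[X_{t+1}]$ is $\mathcal{B}(\Omega_t)$-measurable precisely to make the backward-induction stop sets Borel; Proposition~\ref{cc3} carries no such assumption. Your suggested fix (Borel representatives agreeing $\mathcal{R}$-q.s.) is also not straightforward to make rigorous, because the pointwise bound $U_{t+1}\leq\mathcal{E}_{t+1}[\phi_{\tau^\epsilon}+\mathfrak{g}]+\epsilon$ must hold in $\omega$ for \emph{every} $\omega$ at which $\mathcal{E}_t$ is subsequently evaluated, and modifying $\tau^\epsilon$ on a null set perturbs the \emph{inner} expectation $\mathcal{E}_{t+1}[\cdot]$ through the future coordinates, so ``q.s.\ agreement'' of the stop sets does not obviously transfer to the q.s.\ agreement of the resulting inequality under every kernel in $\mathfrak{R}_{t+1}$. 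In short, your approach quietly re-introduces the Borel measurability condition that Proposition~\ref{cc3} is designed to avoid.

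The paper circumvents the issue entirely by selecting $\epsilon$-optimal \emph{measures} rather than stopping times. Using \cite[Proposition 7.50]{Shreve}, it extracts a universally measurable kernel $P^\epsilon(\omega^t,\tilde\omega)\in\mathfrak{R}_{t+1}(\omega^t,\tilde\omega)$ that $\epsilon$-attains the supremum over $\tilde P\in\mathfrak{R}_{t+1}$ of $\sup_{\tau}E_{\tilde P}[\phi_\tau+\mathfrak{g}]$, pastes it with $P_t$ to form $P^*=P_t\otimes P^\epsilon\in\mathfrak{R}_t(\omega^t)$, and then uses the \emph{classical} optimal stopping theory under the single measure $P^*$ to bound $E_{P^*}\bigl[\sup_{\tau\in\mathcal{T}_{t+1}}E_{P^\epsilon}[\phi_\tau+\mathfrak{g}]\bigr]$ by $\sup_{\tau\in\mathcal{T}_t}E_{P^*}[\phi_\tau+\mathfrak{g}]\leq U_t$. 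Universally measurable selection of kernels is exactly what the analytic-graph assumption on $\mathfrak{R}_t$ buys, whereas measurable selection of a raw-filtration stopping time is a stronger demand. You should rework the second half along these lines: select measures, not stopping times.
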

\begin{proof}
Using the fact that the map $(\omega,P)\mapsto E_p[\mathfrak{g}(\omega^t,\cdot)]$ is u.s.a. for $(\omega,P)\in\Omega_T\times\mathfrak{P}(\Omega_{T-t})$ (see the last paragraph on page 8 in \cite{Nutz3}), we deduce that the map $(\omega,P)\mapsto\sup_{\tau\in\mathcal{T}_t}E_P[\phi_\tau(\omega^t,\cdot)+\mathfrak{g}(\omega^t,\cdot)]$ is u.s.a. Since $\mathfrak{R}_t(\omega^t)$ is the $\omega$-section of an analytic set, we can apply \cite[Proposition 7.47]{Shreve} to conclude that $U_t$ is u.s.a., $t=1,\dotso,T$.  As $U_t$ only depends on the path up to time $t$, it is $\mathcal{F}_t$-measurable.

In the rest of the proof, we shall show that
\begin{equation}\label{cc1}
U_t\geq\mathcal{E}_t[U_{t+1}],
\end{equation}
which will imply the super-martingale property of $U$ under each $P\in\mathcal{R}$.
Fix $(t,\omega)\in\{0,\dotso,T\}\times\Omega_T$ and let $P=P_t\otimes\dotso\otimes P_{T-1}\in\mathfrak{R}_t(\omega^t)$. For any $\eps>0$, since the map $(\tilde\omega,P)\mapsto\sup_{\tau\in\mathcal{T}_t}E_P[\phi_\tau(\omega^t,\tilde\omega,\cdot)+\mathfrak{g}(\omega^t,\tilde\omega,\cdot)]$ is u.s.a. for $(\tilde\omega,P)\in\Omega_1\times\mathfrak{P}(\Omega_{T-t-1})$, and $\mathfrak{R}_{t+1}(\omega^t,\tilde\omega)$ is the $\tilde\omega$-section of an analytic set, we can apply theorem \cite[Proposition 7.50]{Shreve} and get that there exists a universally measurable selector $P^\eps(\omega^t,\cdot)$, such that $P^\eps(\omega^t,\tilde\omega)=P_{t+1}^\eps(\omega^t,\tilde\omega)\otimes\dotso\otimes P_{T-1}^\eps(\omega^t,\tilde\omega,\cdot)\in\mathfrak{R}_{t+1}(\omega^t,\tilde\omega)$, and
$$\left(\sup_{\tilde P\in\mathfrak{R}_{t+1}(\omega^t,\tilde\omega)}\sup_{\tau\in\mathcal{T}_{t+1}} E_{\tilde P}[\phi_\tau(\omega^t,\tilde\omega,\cdot)+\mathfrak{g}(\omega^t,\tilde\omega,\cdot)]-\eps\right)1_A+\frac{1}{\eps}1_{A^c}\leq\sup_{\tau\in\mathcal{T}_{t+1}}E_{P^\eps(\omega^t,\tilde\omega)}[\phi_\tau(\omega^t,\tilde\omega,\cdot)+\mathfrak{g}(\omega^t,\tilde\omega,\cdot)],$$
where 
$$A=\{\tilde\omega\in\Omega_1:\ \sup_{\tilde P\in\mathfrak{R}_{t+1}(\omega^t,\tilde\omega)}\sup_{\tau\in\mathcal{T}_{t+1}} E_{\tilde P}[\phi_\tau(\omega^t,\tilde\omega,\cdot)+\mathfrak{g}(\omega^t,\tilde\omega,\cdot)]<\infty\}.$$
Define
$$P^*:=P_t\otimes P_{t+1}^\eps\otimes\dotso\otimes P_{T-1}^\eps\in\mathfrak{R}_t(\omega^t).$$
Then we have that
\begin{eqnarray}
&&\hspace{-2cm}E_P\left[\left(U_{t+1}(\omega^t,\cdot)-\eps\right)1_A+\frac{1}{\eps}1_{A^c}\right]\notag\\
&&=E_P\left[\left( \sup_{\tilde P\in\mathfrak{R}_{t+1}(\omega^t,\tilde\omega)}\sup_{\tau\in\mathcal{T}_{t+1}} E_{\tilde P}[\phi_\tau(\omega^t,\tilde\omega,\cdot)+\mathfrak{g}(\omega^t,\tilde\omega,\cdot)]-\eps\right)1_A+\frac{1}{\eps}1_{A^c}\right]\notag\\
&&\leq E_P\left[\sup_{\tau\in\mathcal{T}_{t+1}}E_{P^\eps(\omega^t,\tilde\omega)}[\phi_\tau(\omega^t,\tilde\omega,\cdot)+\mathfrak{g}(\omega^t,\tilde\omega,\cdot)]\right]\notag\\
&&=E_{P^*}\left[\sup_{\tau\in\mathcal{T}_{t+1}}E_{P^\eps(\omega^t,\tilde\omega)}[\phi_\tau(\omega^t,\tilde\omega,\cdot)+\mathfrak{g}(\omega^t,\tilde\omega,\cdot)]\right]\notag\\
&&=E_{P^*}\left[\sup_{\tau\in\mathcal{T}_{t+1}}E_{P^\eps(\omega^t,\tilde\omega)}[\phi_\tau(\omega^t,\tilde\omega,\cdot)]\right]+E_{P^*}[\mathfrak{g}(\omega^t,\cdot)]\notag\\
&&\leq\sup_{\tau\in\mathcal{T}_t}E_{P^*}[\phi_\tau(\omega^t,\cdot)]+E_{P^*}[\mathfrak{g}(\omega^t,\cdot)]\notag\\
&&\leq U_t(\omega),\notag
\end{eqnarray}
where the fourth line follows from the fact that $P^*=P$ from time $t$ to $t+1$, the fifth line follows from the tower property as $P^*=P_t\otimes P^\eps$, and the sixth line follows from the classical optimal stopping theory under a single probability measure $P^*$. As $t,\omega,P$ and $\eps$ are arbitrary, \eqref{cc1} holds.
\end{proof}

\section{No arbitrage when there are no options for static hedging}\label{section:appD}
Let $S=(S_t)_{t=0,\dotso,T}$ be the canonical process taking values in some path space $\mathcal{K}\subset\{1\}\times\mathbb{R}^T$, which represents the stock price process. We take $\mathcal{P}$ to be the set of all the probability measures on $\mathcal{K}$. In this secton, we assume that there is no hedging option available, i.e., $e=0$. Let us first identify the reasonable path spaces:
 
\begin{definition} $\mathcal{K}\subset\{1\}\times\mathbb{R}^T$ is called a reasonable path space, if for any $t\in\{0,\dotso,T\}$ and $(s_0=1,s_1,\dotso,s_T)\in\mathcal{K}$, 
\begin{itemize}
\item[(i)] if $s_t>0$, then there exists $(s_0,\dotso,s_t,,s_{t+1}^i,\dotso,s_T^i)\in\mathcal{K},\ i=1,2$, such that $s_{t+1}^1< s_t<s_{t+1}^2$;
\item[(ii)] if $s_t=0$, then $s_k=0,\ k\geq t+1$.
\end{itemize}
\end{definition}

Obviously, if $\mathcal{K}$ is a reasonable path space, then a martingale measure on $\mathcal{K}$ is easy to construct, and thus the no arbitrage in \cite{Schachermayer2} is satisfied. The following proposition states that NA$(\mathcal{P})$ also holds. So the no arbitrage definitions in \cite{Schachermayer2} and \cite{Nutz2} in fact coincide in the case when $\mathcal{K}$ is a reasonable path space and $e=0$. 
\begin{proposition}\label{prop:no-opt}
If $\mathcal{K}$ is a reasonable path space, then NA$(\mathcal{P})$ holds.
\end{proposition}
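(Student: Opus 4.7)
The plan is to reduce NA$(\mathcal{P})$ to a purely pathwise claim and then argue by induction on the horizon $T$. Because $\mathcal{P}$ is the set of \emph{all} probability measures on $\mathcal{K}$, it contains every Dirac mass $\delta_s$ with $s\in\mathcal{K}$, so any $\mathcal{P}$-q.s.\ inequality about a measurable function is equivalent to its holding pointwise on $\mathcal{K}$. With $e=0$, NA$(\mathcal{P})$ therefore amounts to showing that $(H\cdot S)_T(s)\ge 0$ for every $s\in\mathcal{K}$ forces $(H\cdot S)_T(s)=0$ for every $s\in\mathcal{K}$.

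For the base case $T=1$, one has $(H\cdot S)_1=H_0(s_1-1)$ with $H_0\in\mathbb{R}$. Condition~(i) at $t=0$ applies since $s_0=1>0$, and delivers $(1,s_1^1),(1,s_1^2)\in\mathcal{K}$ with $s_1^1<1<s_1^2$; the two inequalities $H_0(s_1^i-1)\ge 0$ for $i=1,2$ then force $H_0=0$.

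For the induction step, I would fix a prefix $(1,s_1,\dotso,s_{T-1})$ in the projection $\mathcal{K}_{T-1}$ of $\mathcal{K}$ onto its first $T$ coordinates, write $a:=\sum_{t=0}^{T-2}H_t(s_0,\dotso,s_t)(s_{t+1}-s_t)$ and $h:=H_{T-1}(s_0,\dotso,s_{T-1})$, and exploit that $a+h(s_T-s_{T-1})\ge 0$ for every admissible $s_T$. If $s_{T-1}>0$, condition~(i) produces $s_T^1<s_{T-1}<s_T^2$ with both extensions in $\mathcal{K}$; the convex combination with weight $\lambda:=(s_T^2-s_{T-1})/(s_T^2-s_T^1)\in(0,1)$ annihilates the $h$-term and yields $a\ge 0$. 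If $s_{T-1}=0$, condition~(ii) forces $s_T=0$, and the constraint reads $a\ge 0$ directly. Hence $a\ge 0$ pointwise on $\mathcal{K}_{T-1}$.

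Next, I would verify that $\mathcal{K}_{T-1}\subset\{1\}\times\mathbb{R}^{T-1}$ is itself a reasonable path space of horizon $T-1$: conditions~(i) and~(ii) at times $t\le T-2$ are inherited from $\mathcal{K}$, because the branching paths promised in $\mathcal{K}$ project to paths in $\mathcal{K}_{T-1}$, and the absorption property at $0$ is hereditary. Applying the induction hypothesis to the strategy $(H_0,\dotso,H_{T-2})$ on $\mathcal{K}_{T-1}$ then gives $a\equiv 0$. The original inequality degenerates to $h(s_T-s_{T-1})\ge 0$ on every admissible extension; a second use of condition~(i) (supplying both a strictly negative and a strictly positive increment when $s_{T-1}>0$) forces $h=0$, while the case $s_{T-1}=0$ is trivial by condition~(ii). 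This closes the induction. I do not expect any serious obstacle: once the quasi-sure notion is collapsed to the pointwise one, the only real bookkeeping is checking that $\mathcal{K}_{T-1}$ inherits reasonableness, which is routine from the definitions.
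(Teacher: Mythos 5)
Your proof is correct, but it proceeds in the opposite direction to the paper's. You perform a backward induction on the horizon $T$: fix the first $T-1$ coordinates, use condition~(i) at $t=T-1$ (or condition~(ii) when $s_{T-1}=0$) together with a convex combination to eliminate the last trading increment, conclude that $(H_0,\dotso,H_{T-2})$ super-replicates $0$ on the projection $\mathcal{K}_{T-1}$, verify that $\mathcal{K}_{T-1}$ is again a reasonable path space of horizon $T-1$, and invoke the shorter-horizon hypothesis to kill the accumulated gain through time $T-1$; a second use of condition~(i) then forces $H_{T-1}=0$ wherever $s_{T-1}>0$. The paper instead argues by forward induction on the component index $k$: it shows $H_k(s_1,\dotso,s_k)=0$ for $s_k>0$ by assuming $H_t\neq 0$ and greedily extending the path one step at a time, always choosing the next increment to have sign opposite to that of $H_j$ at the current node (and taking a zero increment once the path hits $0$, which is where condition~(ii) quietly enters), so that $(H\cdot S)_T<0$ along the constructed path, a contradiction. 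Your version requires the additional, though routine, check that reasonableness is inherited by projection; the paper's avoids that check at the cost of constructing an explicit violating path. Both arguments are elementary, of comparable length, and both rest on the same branching property in condition~(i) and the absorption at $0$ in condition~(ii).
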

\begin{proof}
Let $H=(H_0,\dotso,H_{T-1}(s_1,\dotso,s_{T-1}))$ be a trading strategy such that 
\begin{equation}\label{b1}
(H\cdot S)_T\geq 0, \quad\forall s\in\mathcal{K}.
\end{equation}
We need to show $(H\cdot S)_T=0, \forall s\in\mathcal{K}$. It suffices to show that 
\begin{equation}\label{b2}
H_k(s_1,\dotso,s_k)=0, \quad\text{for } s_k>0,
\end{equation}
for $k=0,\dotso,T-1$. We shall show \eqref{b2} by the induction. 

Assume $H_0\neq 0$. Then take $s_1^*>s_0$ if $H_0<0$, and take $s_1^*<s_0$ if $H_0>0$. In general, for $j=1,\dotso T-1$, take $s_{j+1}^*\geq s_j^*$ if $H(s_1^*,\dotso,s_j^*)\leq0$ and $s_{j+1}^*\leq s_j^*$ if $H(s_1^*,\dotso,s_j^*)>0$. Then $(H\cdot S)_T(s_0,s_1^*,\dotso,s_T^*)<0$, which contradicts \eqref{b1}. Hence $H_0=0$ and \eqref{b2} holds for $k=0$.

Assume \eqref{b2} holds for $k\leq t-1$. Then for any $(s_0,\dotso,s_t)$ with $s_t>0$, by assumption (ii), we have that $s_i>0,\ i=0,\dotso,t-1$, and thus $H_i(s_1,\dotso,s_i)=0,\ i=0,\dotso,t-1$ by the induction hypothesis. If $H_t(s_1,\dotso,s_t)\neq 0$, then we can similarly construct $(s_{t+1}^*,\dotso,s_T^*)$ as above, such that $(H\cdot S)_T(s_0,\dotso,s_t,s_{t+1}^*,\dotso,s_T^*)<0$, which contradicts \eqref{b1}. Hence $H_t(s_1,\dotso,s_t)=0$ and \eqref{b2} holds for $k=t$.
\end{proof}

\end{appendices}

\bibliographystyle{siam}
\bibliography{ref}

\end{document}